\newcommand{\mnote}[1]{\marginpar{
\raggedright\tiny $\leftarrow$ #1}}
\newcounter{mnotecount}[section]
\newcommand{\rmnote}[1]{}
\newcommand{\erw}[1]{\mnote{{\bf erw:} #1}}
\newcommand{\di}{{d}}
\newcommand{\dvol}[1]{{d}vol_{#1}}
\newcommand{\nor}[2]{\|{#1}\|_{#2}}
\newcommand{\al}{\alpha}
\newcommand{\be}{\beta}
\newcommand{\bigo}[1]{\mathcal{O} \left( #1 \right)}
\newcommand{\zi}{z^{i}}
\newcommand{\ep}{\varepsilon}
\newcommand{\ex}{{\rm{e}}}
\newcommand{\te}{\theta}
\newcommand{\gep}{g_{\ep}}
\newcommand{\RK}{{\mathbb{R}^k}}
\newcommand{\EN}{{\mathbb{S}^n}}
\newcommand{\RN}{{\mathbb{R}^n}}
\newcommand{\HN}{{\mathbb{H}^n}}
\newcommand{\dv}[1]{{\rm div}_{#1}}
\newcommand{\tr}[1]{{\rm tr}_{#1}}
\newcommand{\DI}[1]{ D_{#1}}
\newtheorem{teor}{Theorem}
\newtheorem{prop}[teor]{Proposition}
\newtheorem{Remark}[teor]{\sc Remark}
\newtheorem{Definition}{Definition}
\date{March 22, 2010}
\begin{document}

\title[Refined gluing for Einstein constraint equations]
{Refined gluing for Vacuum Einstein constraint equations}

\author[E. Delay]{Erwann Delay}
\address{Erwann Delay,
Laboratoire d'analyse non lin\'eaire et g\'eom\'etrie, Facult\'e
des Sciences, 33 rue Louis Pasteur, 84000 Avignon, France}
\email{Erwann.Delay@univ-avignon.fr}
\urladdr{http://www.math.univ-avignon.fr/Delay}
\author[L. Mazzieri]{Lorenzo Mazzieri}
\address{Lorenzo Mazzieri, SISSA - International School for Advanced Studies, via Beirut 2-4, I-34014 Trieste, Italy}
\email{mazzieri@sissa.it}

\maketitle

\begin{abstract}
\noindent We first show that the connected sum along submanifolds introduced by the second author for compact
initial data sets of the vacuum Einstein system
can be adapted to the asymptotically Euclidean and to the asymptotically hyperbolic context. Then, we prove that in any case, and generically, the gluing procedure can be localized, in order to obtain new solutions which coincide with the original ones outside of a neighborhood of the gluing locus.
\end{abstract}

\tableofcontents

\section{Introduction}
\label{s:intro}

It is well known \cite{Ch-B} that a vacuum solution $(Z,\gamma)$ for the Einstein  system ${Ric}_\gamma =0$, where $Z$ is an $(m+1)$-dimensional manifold and $\gamma$ is a Lorentzian metric, may be obtained from solutions to the vacuum Einstein constraint equations on an $m$-dimensional space-like Riemannian submanifold $(M,\tilde g)$ of $Z$ (for further details see eg. \cite{Bar-Ise}). To fix the notations, we say that the triple $(M,\tilde g, \tilde \Pi)$, where $M$ is a smooth manifold, $\tilde g$ is a Riemannian metric and $\tilde \Pi$ is a symmetric $(2,0)$ tensors is a solution to the vacuum Einstein constraints equations if the following relationships are satisfied
\begin{eqnarray}
\label{ECE 1}
J(\tilde g, \tilde \Pi):=\dv{\tilde g} \, \tilde \Pi - \di \big( \tr{\tilde g} \, \tilde\Pi \big) & = & 0  \\
\label{ECE 2} \rho(\tilde g,\tilde\Pi):=R_{\tilde g} - |\tilde\Pi|_{\tilde g}^2 + \big( \tr{\tilde g} \, \tilde\Pi \big)^2 & =
& 0 \, .
\end{eqnarray}
Here $\dv{\tilde g}$ and $\tr{\tilde g}$ are respectively the divergence operator and the trace operator computed with respect to the metric $\tilde g$ and $R_{\tilde g}$ is the scalar curvature of the metric $\tilde g$. In the following we will also refer to the triple $(M, \tilde g, \tilde\Pi)$ as (vacuum) initial data set or Cauchy data set. We also point out that when the evolution $(Z,\gamma)$ of the initial data set is considered according to the hyperbolic formulation of the vacuum Einstein system, then $\tilde g$ and $\tilde\Pi$ turn out to be respectively the Riemannian metric and the second fundamental form induced by the Lorentzian metric $\gamma$ on the space-like slice $M$.

%

Of interest in this paper are constant mean curvature (briefly CMC) initial data sets, this means that $\tau:= \tr{\tilde g} \, \tilde\Pi$ is a constant. In the case the system above becomes equivalent to a semi-decoupled system. In fact, following \cite{Ch-B}, \cite{IMaP} and \cite{IMP}, one can split the second fundamental form $\tilde\Pi$ into trace free and pure trace parts
\begin{eqnarray}
\tilde\Pi & = & \tilde\mu \, + \, \frac{\tau}{m} \,\, \tilde g \, ,
\end{eqnarray}
where $\tilde\mu$ is a symmetric $2$-tensor such that $\tr{\tilde g} \, \tilde\mu = 0$. Applying the so called conformal method, we set
\begin{eqnarray}
\label{conf. change 1}
\tilde g \,\,\, =: \,\,\,  u^{{4}/{m-2}} \, {g} \quad & \quad \hbox{and} \quad  & \quad
\tilde \mu \,\,\, =: \,\,\, u^{-2} \, {\mu} \, ,
\end{eqnarray}
where the conformal factor $u$ is a positive smooth function on $M$. It is now straightforward to check that $\tilde g$ and $\tilde\Pi$ verify the Einstein constraint equations (\ref{ECE 1}) and (\ref{ECE 2}) if
and only if ${g}, {\mu}$ and $u$ satisfy
\begin{eqnarray}
\label{TT}
\begin{cases}
\,\,\tr{{g}} \, {\mu} & = \,\,\, 0 \\
\,\,\dv{{g}} \, {\mu} & = \,\,\, 0
\end{cases}
\end{eqnarray}
\begin{eqnarray}
\label{Lic}
\Delta_{{g}} \, u \, - \, c_m \,
R_{{g}} \,\, u \, + \, c_m \, | {\mu} \,|_{{g}}^2 \,\,
u^{- \frac{3m-2}{m-2}} \, - \, c_m \, \tfrac{m-1}{m}  \, \tau^2 \,\,
u^{\frac{m+2}{m-2}} & = & 0 \,,
\end{eqnarray}
with $c_m =  \tfrac{m-2}{4\,(m-1)}$. Notice that our Laplacian is negative definite.

Therefore, starting with a metric ${g}$ and a real number $\tau$, one can obtain a $\tau$-CMC solution to the Einstein constraints by producing a symmetric $(2,0)$-tensor $\mu$ which verifies \eqref{TT} (for short a TT-tensor) together with a smooth positive solution $u$ to (\ref{Lic}), which will play the role of the conformal factor. Using then \eqref{conf. change 1} it is easy to recover the triple $(M, \tilde g, \tilde\Pi)$.

In this context and due to the physical meaning, the equation (\ref{ECE 1}) (or equivalently the second equation in (\ref{TT})) is known as the momentum constraint, whereas the equation (\ref{ECE 2}) is the so called Hamiltonian constraint and modulo the conformal transformations it corresponds to the Lichnerowicz equation (\ref{Lic}).

\subsection{Conformal gluing}

In the spirit of \cite{Mazz 3}, we suppose now to start with two Cauchy data sets in the semi-decoupled formulation of the constraints, namely two solutions $(M_i  , \, {g}_i  , \, {\mu}_i  , \, u_i, \, \tau)$, $i=1,2$ to equations (\ref{TT}) and (\ref{Lic}) with the same constant mean curvature $\tau$
and we construct the generalized connected sum of
the $m$-dimensional manifolds $M_1$ and $M_2$ along a common isometrically embedded $k$-dimensional Riemannian submanifold $(K, g_K)$ with codimension $n := m-k \geq 3$. This construction consists in excising a small $\ep$-tubular
neighborhood (i.e. a tubular neighborhood of size $\ep \in (0,1)$)
of $K$ in both the starting manifolds and in identifying the
differentiable structures along the leftover boundaries as explained
in \cite{Mazz} and summarized in Section 2. The reasons for requiring $n\geq 3$ is discussed in \cite{Mazz} and \cite{Mazz 2} as well as in \cite{Mazz 3}. Our goal is then to endow the new manifold $M_\ep \, = \, M_1 \, \sharp_{K,\ep} \, M_2$ with a Riemannian structure $g_{\ep}$ and a symmetric TT-tensor $\mu_\ep$ such that it is possible to find a solution $u_\ep$ to the Lichnerowicz equation $(\ref{Lic})$.

In \cite{Mazz 3} the second author treated the case where $M_1$, $M_2$ and $K$ are compact. Here we extend the conformal gluing construction to the case where $M_1$ and $M_2$ are asymptotically Euclidean (AE) and asymptotically hyperbolic (AH) and the submanifold $K$ is still compact. The generalized connected sum along a non compact submanifold embedded in asymptotically Euclidean or asymptotically hyperbolic initial data sets seems to require some extra assumption on the behavior of $K$ at infinity and will be the object of further investigations.

As in \cite{Mazz 3}, we will obtain a control of the new solutions in terms of the original ones. In fact, setting
\begin{eqnarray*}
\tilde{g}_\ep \,\,\, := \,\,\, u_\ep^{\frac{4}{m-2}} \, g_\ep \,  & \quad \hbox{and}  & \quad
\tilde{\Pi}_\ep \,\,\, := \,\,\, u_\ep^{-2}\, \mu_\ep \, + \, \frac{\tau}{m}
\,\, u_\ep^{\frac{4}{m-2}} \, g_\ep \, , \\
\tilde{g}_i \,\,\, := \,\,\, u_i^{\frac{4}{m-2}} \, g_i \,  & \quad \hbox{and}  & \quad
\tilde{\Pi}_i \,\,\, := \,\,\, u_i^{-2}\, \mu_i \, + \, \frac{\tau}{m}
\,\, u_i^{\frac{4}{m-2}} \, g_i \, ,
\end{eqnarray*}
we will have that $\tilde{\gep} \rightarrow \tilde{g}_i$ and $\tilde{\Pi}_\ep \rightarrow \tilde{\Pi}_i$ on the compact sets of $M_i \, \setminus \,K$, with
respect to the $\mathcal{C}^r$ topology, for every $r \in \mathbb{N}$, $i=1,2$. This fact turns out to be very important when we will discuss the localization procedure.

The strategy of the proof is the same as in the compact case \cite{Mazz 3} and here we just outline it briefly. The first step is the construction of a family of approximate solution metrics $g_\ep$, $\ep \in (0,1)$, which coincide with $g_i$, $i=1,2$ away from $K$. In correspondence to each $\ep$ we will then produce a $g_\ep$-trace free symmetric 2-tensor $\mu$ by means of cut-off functions. We notice that $\mu$ coincide with $\mu_i$, $i=1,2$, in the region where $\gep$ coincide with the original metrics.

The next step is the resolution of the momentum constraint equation, which is the linear part of the semi-decoupled system. This will be done by adding to $\mu$ a correction term of the form $D_{\gep} X$, where $D_{\gep}$ denotes the conformal Killing operator acting on vector fields
\begin{eqnarray}
\label{deformation operator}
\DI{\gep} \, X  & := & \frac{1}{2} \,\,
\mathcal{L}_X \, \gep \, - \, \frac{1}{m} \, \left( \dv{\gep} \, X \right)
\,\cdot \, \gep \, .
\end{eqnarray}
As a consequence, the momentum equation becomes
\begin{eqnarray}
\label{vector laplacian}
L_{\gep} \, X & = & \sharp \, \dv{\gep} \, \mu \, ,
\end{eqnarray}
where $L_{\gep} :=  \sharp \, \dv{\gep} \, \circ
\, \DI{\gep}$ is a second order elliptic partial differential operator known as vector Laplacian. The fact that $L_{\gep}$ is self-adjoint follows from the algebraic identity $\DI{\gep}  =  - \, \left( \sharp \, \dv{\gep}\right)^* $.

A crucial step in the study of equation \eqref{vector laplacian} is to provide the solution with an {\em a priori} estimate independent of $\ep$. This will allows us to control the squared pointwise norm $|\mu_\ep|^2$ of the TT-tensor $\mu_\ep := \mu + \DI{\gep}X$ which appears in the Lichnerowicz equation. To obtain this $\ep$-uniform {\em a priori} bound we need to introduce the following notion of nondegeneracy
\begin{Definition}[Nondegeneracy]
A compact Riemannian manifold $(M,g)$ is said to be {\em nondegenerate} if there are no nontrivial conformal Killing vector fields on it or equivalently if the conformal Killing operator $\DI{g}$ is injective.
\end{Definition}
We will see that the injectivity of the conformal Killing operator is automatically achieved in the asymptotically Euclidean (AE) and in the asymptotically hyperbolic (AH) situation, up to a careful choice of the functional setting. Notice that due the different geometric construction this assumption turns out to be slightly different from the nondegeneracy condition required in \cite{IMP}. In fact the IMP gluing works under the assumption that there are no nontrivial conformal Killing vector fields on $(M_1, g_1)$ and $(M_2, g_2)$ which vanish at the excised points.

The last step of the overall strategy is the perturbative resolution of the Lichnerowicz equation. In analogy with \cite{IMP} and \cite{Mazz 3} this will be done by studying the linearized operator about $\gep$.
Here as well the key point is the obtention of $\ep$-uniform {\em a priori} bounds. With these at hand, one can then carry out a fixed point argument which yields a solution to the Lichnerowicz equation. Again, the proof of the uniform {\em a priori} bounds requires a sort of non-degeneracy assumption, more precisely we need to suppose that both $\tilde\Pi_1$ and $\tilde\Pi_2$ are non identically zero. This is sufficient to guarantee that the linearized Lichnerowicz operators about the original metrics
\begin{eqnarray}
\label{injective} \qquad \qquad \qquad \Delta_{g_i} \, - \,
|{\mu}_i|^2_{{g_i}} \, - \, \tau^2/m \, , & &
i=1,2\,
\end{eqnarray}
are injective. We remark that, also in this case, for asymptotically Euclidean and asymptotically hyperbolic summands the injectivity of the linearized Lichnerowicz operator will follows directly from a judicious choice of the functional setting, without any further assumption.

Following the strategy summarized above, we can prove the following generalized conformal gluing result
\begin{teor}[Conformal gluing]
\label{teo 1} Let $(M_1,\, \tilde{g}_1, \, \tilde{\Pi}_1)$ and
$(M_2,\, \tilde{g}_2, \, \tilde{\Pi}_2)$ be two $m$-dimensional CMC solutions to the Einstein constraint equations (\ref{ECE 1})-(\ref{ECE 2}) having the same constant mean curvature $\tau$ and such that $M_i$ is either compact or AE or AH. If for some $i=1,2$ $M_i$ is compact, we further assume that it is {\em nondegenerate} and that $\tilde\Pi_i$ is non identically zero. Let $(K,\, \tilde{g}_K)$ be a common isometrically embedded
$k$-dimensional compact sub-manifold of codimension $n\, := \, m-k \, \geq
\, 3$ such that the normal bundles of $K$ in $M_1$ and in $M_2$ are
diffeomorphic. Then
\vspace{0.2cm}
\begin{enumerate}
\item There exists a real number $\ep_0 \in (0,1)$ such
that for every $\ep \in (0, \ep_0)$ it is possible to endow the
$\ep$-generalized connected sum $M_\ep \, = \, M_1 \,
\sharp_{K,\,\ep} \, M_2$ of $M_1$ and $M_2$ along $K$ with a metric
$\tilde{g}_\ep$ and a second fundamental form $\tilde{\Pi}_\ep$ such
that the triple $(M_\ep, \, \tilde{g}_\ep , \, \tilde{\Pi}_\ep )$ is
still a $\tau$-CMC solution to the Einstein constraint equations.

\vspace{0.2cm}

\item The new metric $\tilde{g}_\ep$ and the new symmetric TT-tensor $\tilde{\mu}_\ep$ tend to the original metric
$\tilde{g}_i$ and to the original symmetric TT-tensor
$\tilde{\mu}_i$ with respect to the $\mathcal{C}^r$-topology on the compact subsets of $M_i \, \setminus \, K $, for every $r \in \mathbb{N}$ and $i=1,2$, when the geometric parameter $\ep$ tends to zero.
\end{enumerate}
\end{teor}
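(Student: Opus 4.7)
The plan is to mirror the three-step strategy of \cite{Mazz 3}, with the AE and AH ends accommodated by a judicious choice of weighted function spaces. First I would build, as in \cite{Mazz}, a family of approximate metrics $\gep$ on $M_\ep = M_1 \sharp_{K,\ep} M_2$ by identifying the $\ep$-tubular neighborhoods of $K$ in $M_1$ and $M_2$ with a rescaled normal bundle model, and interpolating between $g_1$ and $g_2$ on the annular transition region with cut-offs so that $\gep \equiv g_i$ on $M_i$ outside a shrinking neighborhood of $K$; the AE/AH ends, being far from the compact $K$, are left untouched. In parallel, the TT-tensors $\mu_1$ and $\mu_2$ are patched by cut-offs and then corrected via a trace-projection, producing a symmetric tensor $\mu$ which is $\gep$-trace free and coincides with $\mu_i$ outside the neck.

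Next I would address the momentum constraint by the ansatz $\mu_\ep = \mu + \DI{\gep} X$, which reduces \eqref{ECE 1} to the vector Laplacian equation \eqref{vector laplacian}. The operator $L_{\gep}$ is self-adjoint, and its invertibility follows from nondegeneracy in the compact case and automatically from the weighted functional setting in the AE/AH case. Having solved for $X$ with an $\ep$-uniform bound in a norm that controls concentration near $K$, one turns to the Lichnerowicz equation with $\mu_\ep := \mu + \DI{\gep} X$. Letting $u$ be an approximate conformal factor obtained from $u_1$, $u_2$ by cut-off patching and writing $\uep = u + v$, equation \eqref{Lic} becomes $\lep v = F(v,\ep)$, where $\lep$ is the linearization of the Lichnerowicz operator at $u$ and $F$ collects the mismatch generated by $u$ and the superlinear remainders. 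Under the injectivity of the operators in \eqref{injective} (ensured by $\tilde\Pi_i \not\equiv 0$ in the compact case and automatic in weighted spaces on AE/AH summands), and the analogous uniform invertibility of $\lep$ in $\ep$, a contraction-mapping argument yields a small $v$ and hence a positive smooth solution $\uep$, provided $\ep$ is small enough. Setting $\tilde\gep = \uep^{4/(m-2)} \gep$ and $\tilde\Pi_\ep = \uep^{-2}\mu_\ep + (\tau/m)\,\uep^{4/(m-2)} \gep$ recovers the desired $\tau$-CMC data on $M_\ep$. The convergence $(\tilde\gep,\tilde\Pi_\ep) \to (\tilde g_i, \tilde\Pi_i)$ in $\mathcal{C}^r$ on compact subsets of $M_i\setminus K$ is then immediate, since $\gep \equiv g_i$ and $\mu\equiv\mu_i$ there, while the corrections $\DI{\gep}X$ and $v$ are small in norms which, off the neck, dominate $\mathcal{C}^r$ norms by standard elliptic regularity.

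The main technical obstacle is the production of a priori bounds for $L_{\gep}^{-1}$ and $\lep^{-1}$ that are uniform in $\ep$: as $\ep \to 0$ the metric $\gep$ degenerates on the neck, and a direct application of elliptic estimates would blow up. The resolution, as in \cite{Mazz 3}, is a carefully designed two-scale (doubly-weighted) functional setting: the weight on the neck encodes the scaling behavior of solutions under the rescalings used to build the model neck, while in the AE/AH case a second weight at infinity prescribes the asymptotic decay appropriate to the Bartnik-type or Lee-type framework. A blow-up/contradiction argument in these spaces then reduces uniform invertibility to the injectivity of the limit operators on the summands and on the model neck space, which is exactly what the nondegeneracy hypotheses (and the weighted choice of spaces on the ends) deliver. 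Once these uniform estimates are available, both fixed-point steps become routine, and the statement of Theorem~\ref{teo 1} follows.
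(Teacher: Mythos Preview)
Your proposal is correct and follows essentially the same three-step strategy as the paper: build $\gep$ and a cut-off $\gep$-trace-free tensor $\mu$, solve the momentum constraint $L_{\gep}X=\sharp\,\dv{\gep}\mu$ with $\ep$-uniform bounds obtained by a blow-up argument in doubly weighted H\"older spaces (neck weight $\rho_\ep$ plus end weight $r^{-\nu}$ or $y^{\nu}$), then solve the Lichnerowicz equation by a contraction in the same framework using uniform invertibility of the linearization. Two cosmetic discrepancies worth flagging: the paper enforces $\tr{\gep}\mu=0$ by a \emph{warped} pair of cut-offs $a=\phi^2 b$ rather than a trace projection, and it linearizes the Lichnerowicz equation about the constant $1$ (working in the gauge $g_i=\tilde g_i$, $u_i\equiv 1$) rather than about a patched $u$; neither alters the analysis.
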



\subsection{Localized gluing}
Once the conformal gluing is performed, we refine the result of Theorem \ref{teo 1} by showing that one can actually produce new solutions to the Einstein constraints on $M_\ep$ which coincide exactly with the starting ones outside a suitable neighborhood of the gluing locus. This will be done by re-gluing the old solutions to the conformally glued ones far enough from the so called poly-neck region.
%
%

To present our second issue, we recall that, in analogy with \cite{Mazz 3}, the conformal gluing occurs, for $\ep\leq \ep_0<1$, in the regions $K_1^i\backslash \overline K^i_\ep$, where for an arbitrary $r>0$ we have set
$$
K_r^i \,\, := \,\, \{ \, m\in M_i \,\,\, : \,\,\, m=\exp^{g_i}_z(x),\; z\in K, \; x\in (T_zK)^\bot, |x|_{g_i}<r \, \} \, .
$$
Furthermore, we recall that $( \tilde{g}_\ep , \, \tilde{\Pi}_\ep )$ are conformal deformations of
$(\tilde{g_i}, \, \tilde{\Pi_i})$ on
$M_i\setminus K_1^i$, $i=1,2$. Having these observations in mind, we will show that in \emph{generic} situations the conformal gluing can be localized. Our second result reads
\begin{teor}[Localized gluing]
\label{Tloc} Let $\ep_0<1/2$ be small enough so that the  boundary's
 $\partial K_\ep^i$ are smooth manifolds for $\ep\le 2\ep_0$. Suppose
that there exists $0<\ep_1\leq \ep_0$ such that the set of KIDs on $\Gamma^i (\ep_1,2\ep_1):= K_{2\epsilon_1}^i\setminus \overline{K_{\epsilon_1}^i}$ is trivial. Then, there exists
 $\ep_2\leq \ep_1$ and a family of smooth vacuum initial data sets $( M_\ep,  \hat g_\ep,
 \hat\Pi_\ep)$, $\ep\le \ep_2$ such that
 $$(\hat g_\ep, \hat \Pi_\ep)= ( \tilde{g}_i,\tilde\Pi_i) \ \mbox{ on } \  M_i\setminus
K_{2\ep_1}^i \,\, \quad \,\, i=1,2
$$
and
$
(\hat g_\ep, \hat \Pi_\ep)$ coincide with  $(\tilde g_\ep, \tilde \Pi_\ep)$ in the
poly-neck region.
 \end{teor}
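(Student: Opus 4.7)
The plan is to implement a Corvino--Schoen style deformation argument on each of the two annular regions $\Gamma^i(\ep_1,2\ep_1)=K_{2\ep_1}^i\setminus \overline{K_{\ep_1}^i}$, $i=1,2$, exploiting the underdetermined elliptic character of the linearized constraint map, whose formal adjoint is injective precisely because there are no KIDs on these annuli.

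\textbf{Step 1: Cut-off interpolation.} On each factor $M_i$, pick a smooth radial cut-off $\chi_i$ which equals $1$ on $K^i_{\ep_1}$ and vanishes outside $K^i_{\frac{3}{2}\ep_1}$. Define the interpolated data
\begin{equation*}
\bar g_\ep := \chi_i\,\tilde g_\ep + (1-\chi_i)\,\tilde g_i,\qquad \bar\Pi_\ep := \chi_i\,\tilde\Pi_\ep + (1-\chi_i)\,\tilde\Pi_i
\end{equation*}
on the portion of $M_\ep$ corresponding to $M_i$ (keeping $(\tilde g_\ep,\tilde\Pi_\ep)$ unchanged on the poly-neck). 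By part (2) of Theorem~\ref{teo 1}, the constraint defects $\Phi(\bar g_\ep,\bar\Pi_\ep):=(J,\rho)(\bar g_\ep,\bar\Pi_\ep)$ are supported in $\Gamma^i(\ep_1,\tfrac{3}{2}\ep_1)$ and tend to $0$ in every $\mathcal{C}^r$-norm as $\ep\to 0$.

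\textbf{Step 2: Reduction to a fourth-order elliptic equation.} Following Corvino--Schoen and Chru\'sciel--Delay, search for a correction of the form $(h,p)=D\Phi^*_{(\bar g_\ep,\bar\Pi_\ep)}(N,Y)$ with $(N,Y)$ compactly supported in the closed annulus $\overline{\Gamma^i(\ep_1,2\ep_1)}$. Setting $(\hat g_\ep,\hat\Pi_\ep):=(\bar g_\ep+h,\bar\Pi_\ep+p)$, expand
\begin{equation*}
\Phi(\hat g_\ep,\hat\Pi_\ep) = \Phi(\bar g_\ep,\bar\Pi_\ep) + L_{(\bar g_\ep,\bar\Pi_\ep)}(N,Y) + Q(N,Y),
\end{equation*}
where $L:=D\Phi\circ D\Phi^*$ is a formally self-adjoint, fourth-order elliptic operator, and $Q$ gathers the quadratic and higher-order terms.

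\textbf{Step 3: Uniform weighted inverse for $L$.} Introduce weighted Sobolev (or H\"older) spaces on $\Gamma^i(\ep_1,2\ep_1)$ whose weights vanish to sufficiently high order at $\partial K^i_{\ep_1}\cup \partial K^i_{2\ep_1}$, so that elements of the domain extend by zero across the annular boundary. The no-KIDs hypothesis means $D\Phi^*_{(\tilde g_i,\tilde \Pi_i)}$ is injective on this annulus, and hence so is $D\Phi^*_{(\bar g_\ep,\bar\Pi_\ep)}$ for $\ep$ small, by the $\mathcal{C}^r$-convergence of the background data on $\Gamma^i(\ep_1,2\ep_1)$. A standard Poincar\'e--Korn-type inequality then yields the coercive estimate needed to invert $L$ as an isomorphism between the chosen weighted spaces, with operator norm bounded uniformly in $\ep$ (since the geometry of the fixed annulus is uniformly close to that of $(\tilde g_i,\tilde\Pi_i)$).

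\textbf{Step 4: Fixed point in the nonlinear problem.} Rewrite the constraint equation as
\begin{equation*}
(N,Y) = -L^{-1}\bigl(\Phi(\bar g_\ep,\bar\Pi_\ep)\bigr) - L^{-1}\bigl(Q(N,Y)\bigr),
\end{equation*}
and apply a contraction-mapping argument in a small ball of the weighted space. The smallness of the defect from Step~1 and the $\ep$-uniform bound on $L^{-1}$ from Step~3 supply the required contraction for $\ep\le \ep_2$ with $\ep_2$ sufficiently small. The fixed point $(N_\ep,Y_\ep)$ is compactly supported in the annulus, so the resulting deformation $(h_\ep,p_\ep)$ vanishes identically outside $\Gamma^i(\ep_1,2\ep_1)$. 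Thus $(\hat g_\ep,\hat\Pi_\ep)$ agrees with $(\tilde g_i,\tilde\Pi_i)$ on $M_i\setminus K^i_{2\ep_1}$ and with $(\tilde g_\ep,\tilde\Pi_\ep)$ on the poly-neck, while solving the vacuum constraints globally on $M_\ep$.

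The main obstacle is Step~3: establishing the isomorphism property of $L$ in weights that degenerate at $\partial\Gamma^i(\ep_1,2\ep_1)$, with constants independent of $\ep$. This is where the triviality of KIDs on $\Gamma^i(\ep_1,2\ep_1)$ is essential; without it, the Poincar\'e-type inequality at the heart of the coercivity fails, and no compactly supported correction can exist.
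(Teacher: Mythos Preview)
Your argument is correct and follows exactly the strategy of the paper: interpolate via cut-offs on each annulus $\Gamma^i(\ep_1,2\ep_1)$, use the $\mathcal{C}^r$-convergence from Theorem~\ref{teo 1} to make the constraint defect small and compactly supported, then kill it by a Corvino--Schoen/Chru\'sciel--Delay deformation, which succeeds precisely because the no-KIDs hypothesis gives injectivity of $D\Phi^*$ and hence the weighted coercivity of $L=D\Phi\circ D\Phi^*$. The only difference is presentational: the paper invokes Theorem~5.9 and Corollary~5.11 of \cite{CD2} as a black box for your Steps~2--4, whereas you have unpacked the mechanism behind that reference.
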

Note that {\em a priori} these solutions fail to be CMC in the region $\Gamma^i (\ep_1,2\ep_1)$, $i=1,2$.

To explain better the meaning of our hypothesis we also recall that the KIDs are by definition the elements in the kernel of the $L^2$ formal adjoint $D\Phi^*(g,\Pi)$ of the linearization at $(g,\Pi)$ of the constraint operator $\Phi(g,\Pi) \,:= \,(\rho(g,\Pi),J(g,\Pi))$. According to \cite{BCS}, one has that for a {\em generic} metric the set of KID's on $\Gamma(\ep_1,2\ep_1)$ is trivial for all $\ep_1$.

Finally, we observe that in the case where the set of KIDs is trivial only on say $\Gamma^1 (\ep_1,2\ep_1)$, then the same construction gives initial data sets which coincide with $(\tilde g_1,\tilde \Pi_1)$ on $M_1\setminus K^1_{2\ep_1}$
and with $(\tilde g_\ep, \tilde \Pi_\ep)$ on $M_\ep\setminus (M_1\setminus K^1_{\ep_1})$.

\section{The geometric construction}
\label{geom construction}

In this section we briefly present the construction of the approximate solution metrics $\gep$, $\ep \in (0,1)$ and the consequent construction of the $g_\ep$-trace free 2-tensors $\mu = \mu(\ep)$ via cut-off functions. Even though the construction is local and then it is identical to the one used in \cite{Mazz 3}, we prefer to recall it quickly, in order to fix the notations that will be used throughout the paper.

%

%

%


Let $(K,g_K)$ be a $k$-dimensional compact Riemannian manifold isometrically embedded in both the $m$-dimensional Riemannian
manifolds $(M_1,g_1)$ and $(M_2,g_2)$, we label the embedding maps
as follows
\begin{eqnarray*}
\iota_i : K \hookrightarrow M_i \,\,, \quad i=1,2.
\end{eqnarray*}
\noindent We assume that the map $\iota_1^{-1}\circ\,\, \iota_2
: \iota_1(K) \rightarrow \iota_2(K)$ is an isometry which extends to a diffeomorphism
between the normal bundles of $\iota_i (K)$ in $(M_i,g_i)$, $i=1,2$.
To simplify the notations and the computations, we suppose, without loss of generality, that the injectivity radius of $K$ in both the manifolds is greater than one, so that we are allowed to manipulate the differential and the metric structure of the original solutions in a fixed size one tubular neighborhood of $K$ in both $M_1$ and $M_2$. This fixed tubular neighborhood will be referred in the following as {\em gluing locus}.

For a fixed $\ep \in (0,1)$, we describe the
construction of the generalized connected sum of
$M_1$ and $M_2$ along $K$ and the definition of the metric $g_\ep$
in local coordinates. The fact that this construction yields a
globally defined metric will follow at once.

Let $U^k$ be an open set in $\mathbb{R}_z^k $ an let $B^{n}$ be the $n$-dimensional open ball of radius one in $\mathbb{R}_x^n$. We recall that $n$ is the codimension of $K$ in $M_i$ an verifies $n := m-k \geq 3$. For $i=1,2$, the map
given by
\begin{eqnarray}
F_i : U^k \times B^{n} \rightarrow W_i \subset M_i \,\, ,  & &   F_i(z,x) \, : = \, \exp^{\,g_i}_{(z,0)}(x) \,
\end{eqnarray}
defines local Fermi coordinates near the coordinate patches
$F_i(\cdot,0)\big( U^k \big)\subset \iota_i(K) \subset M_i$. In
these coordinates, the metric $g_i$ can be decomposed as
\begin{eqnarray}
g_{(i)}(z,x) & = & g^{(i)}_{jl} dz^{j} \otimes dz^{l} + g^{(i)}_{\al
\be}dx^{\al} \otimes dx^{\be} + g^{(i)}_{j \al }dz^{j} \otimes
dx^{\al} \, ,
\end{eqnarray}
where the coefficients are such that
\begin{eqnarray*}
g^{(i)}_{\al \be} = \delta_{\al \be}+ \bigo{|x|^2} & \qquad
\mbox{and} \qquad & g^{(i)}_{j \al } = \bigo{|x|} \, .
\end{eqnarray*}
Also notice that, since $K$ is isometrically embedded in $M_1$ and in $M_2$, we have that
$$
g^{(1)}_{ij} (z,0) \,\, = \,\, g^K_{ij}(z) \,\, = \,\, g_{ij}^{(2)} (z,0) \,.
$$
To define the differential and the metric structure of the generalized connected sum, we consider now, in correspondence to each $\ep \in (0,1)$, the hollow domain $U^k \, \times \, A^n \, (\ep^2,1)$, where, for $r,s \in \mathbb{R}^+$, $A^n \, (r,s)$ denotes the $n$-dimensional annulus $\{r < |x| < s \}$. With the notations of the previous chapter we have that
$$
F_i \, (U^k \times A^n(\ep^2,1)) \,\, = \,\, W_i \cap \Gamma^i(\ep^2,1) \,\, , \quad i=1,2\,.
$$
To identify $W_1 \cap \Gamma^1(\ep^2,1)$ {and} $W_2 \cap \Gamma^2(\ep^2,1) $ it is convenient to set
\begin{eqnarray*}
x & = & \ep \,
\ex^{-t} \, \theta  \quad \hbox{in} \,\, F_1^{-1}\big[ W_1 \cap \Gamma^1(\ep^2,1) \big]
\\
x & = & \ep \, \ex^{t} \,
\theta \quad \,\,\,\, \hbox{in} \,\,  F_2^{-1}\big[ W_2 \cap \Gamma^2(\ep^2,1) \big] \,,
\end{eqnarray*}
with $\ep \in (0,1)$, $\log \ep  < t < -
\log \ep$, $\theta \in S^{n-1}$. Now, it is immediate to check that both $W_1 \cap \Gamma^1(\ep^2,1)$ and $W_2 \cap \Gamma^2(\ep^2,1)$ are diffeomorphic to $U^k \times (\log\ep,-\log\ep)\times S^{n-1}$ and that $(z, t, \theta)$ yield a local coordinates for $M_\ep$, this automatically provides the local definition of the differential structure of $M_\ep$. Using these coordinates, the expressions of
the two metrics $g_1$ and $g_2$ become respectively
\begin{eqnarray}
g_{1}(z,t,\te) & = & g^{(1)}_{ij} dz^i \otimes dz^j \nonumber \\
& + & u_{\ep}^{(1)} \, ^{\frac{4}{n-2}} \left[ \left(dt \otimes dt +
g^{(1)}_{\lambda \mu} d\theta^\lambda \otimes d\theta^\mu \right) +
g^{(1)}_{t \theta}dt \ltimes d\theta
\right]\\
& + & g^{(1)}_{i t} d\zi \otimes dt + g^{(1)}_{i \lambda} d\zi
\otimes d\te^{\lambda} \nonumber
\end{eqnarray}
and
\begin{eqnarray}
g_{2}(z,t,\te) & = & g^{(2)}_{ij} dz^i \otimes dz^j \nonumber \\
& + & u_{\ep}^{(2)} \, ^{\frac{4}{n-2}} \left[ \left(dt \otimes dt +
g^{(2)}_{\lambda \mu} d\theta^\lambda \otimes d\theta^\mu \right) +
g^{(2)}_{t \theta}dt \ltimes d\theta
\right]\\
& + & g^{(2)}_{i t} d\zi \otimes dt + g^{(2)}_{i \lambda} d\zi
\otimes d\te^{\lambda} \, , \nonumber
\end{eqnarray}
where the compact notation $g_{t \theta} \, dt \ltimes d\theta$ indicates the general component of the normal metric tensor, i.e., a linear combination with smooth coefficients of $dt
\otimes dt$, $d\theta^\lambda \otimes d\theta^\mu$ and $dt \otimes d\theta^\lambda$.
The normal conformal factors are given by
\begin{eqnarray*}
u_{\ep}^{(1)} (t) = \ep^{\frac{n-2}{2}} \ex^{-\frac{n-2}{2}t} &
\qquad \mbox{and} \qquad & u_{\ep}^{(2)} (t) = \ep^{\frac{n-2}{2}}
\ex^{\frac{n-2}{2}t}\, ,
\end{eqnarray*}
whereas $g_{\lambda \mu}^{(1)}$ and $g_{\lambda\mu}^{(2)}$ are the coefficients of the round metrics on $S^{n-1}$. Remark that for $j = 1,2$ we have
\[
\begin{array}{rlllllll}
g^{(j)}_{\lambda \mu} & = & \bigo{1} & \qquad \qquad &
g^{(j)}_{t \theta} & =& \bigo{|x|^2}\\[3mm]
g^{(j)}_{i t} & = & \bigo{|x|^2} & \qquad \qquad &
g^{(j)}_{i \lambda} & = & \bigo{|x|^2} \, .
\end{array}
\]

We choose now a cut-off function $\chi : (\log\ep, -\log\ep)
\rightarrow [0,1]$ which is a non increasing smooth function identically equal to $1$ in $(\log\ep, -1]$ and identically equal to $0$ in
$[1,-\log\ep)$. We choose then another cut-off function $\eta :
(\log\ep, -\log\ep) \rightarrow [0,1]$ which is a non increasing smooth function identically equal to $1$ in $(\log\ep, -\log\ep
-1]$ and such that $\lim_{t\rightarrow -\log\ep} \eta = 0$.
Using these two cut-off functions, we can define a new normal
conformal factor $u_{\ep}$ by setting
\begin{eqnarray}
u_{\ep} (t) & : = & \eta(t) \, u_{\ep}^{(1)} (t) \, + \, \eta(-t) \,
u_{\ep}^{(2)}(t) \, .
\end{eqnarray}
The metric $g_{\ep}$ is then locally defined by
\begin{eqnarray}
\label{gep}
g_{\ep}(z,t,\te) & := & \left(\chi g^{(1)}_{ij} + (1-\chi) g^{(2)}_{ij} \right)dz^i \otimes dz^j \nonumber \\
& + & u_{\ep}^{\frac{4}{n-2}} \left[ dt \otimes dt + \left(\chi
g^{(1)}_{\lambda \mu} + (1-\chi) g^{(2)}_{\lambda \mu}
\right)d\theta^\lambda \otimes d\theta^\mu \right. \nonumber \\
& & \qquad \qquad \qquad + \left. \left(\chi g^{(1)}_{t \theta}
+ (1-\chi) g^{(2)}_{t \theta} \right) dt \ltimes d\theta \right]\\
& + & \left( \chi g^{(1)}_{i t} + (1-\chi) g^{(2)}_{i t} \right)
d\zi
\otimes dt \nonumber \\
& + & \left(\chi g^{(1)}_{i \lambda} + (1-\chi) g^{(2)}_{i
\lambda}\right) d\zi \otimes d\te^{\lambda} \, . \nonumber
\end{eqnarray}

Closer inspection of this expression shows that the only objects
that are not {\it a priori} globally defined on the identification
of the hollow tubular neighborhoods (poly-neck) of $\iota_{1} (K)$ in $M_1$
and $\iota_2 (K)$ in $M_2$ are the functions $\chi$ and $u_{\ep}$
(since the cut-off $\eta$ is involved in its definition). However,
observe that both cut-off functions can easily be expressed as
functions of the Riemannian distance to $K$ in the respective
manifolds. Hence, they are globally defined and the metric $g_{\ep}$
- whose definition can be obviously completed by setting $g_{\ep}
\equiv g_1$ and $g_{\ep} \equiv g_2$ out of the poly-neck - is a
Riemannian metric which is globally defined on the manifold $M_\ep$.

We conclude this section with the definition of the
`proto-TT-tensor' $\mu = \mu(\ep)$. This is a $\gep$-trace free symmetric 2-tensor which in the next section will be corrected to a $\gep$-TT-tensor by adding an $\ep$-uniformly bounded term of the form $\DI{\gep} X$, as anticipated in the introduction. To define $\mu$ we describe a warped cut off
procedure on the side of the polyneck coming from $M_1$. The same
manipulation on the other side provides us with the complete
definition of $\mu$. Let then $\mu_1$ be the given $g_1$-trace-free symmetric tensor on $M_1$. In local coordinates this reads
\begin{eqnarray*}
\label{traceless1} g^{(1)}_{ij} \, \mu_1^{ij} \, + \, 2 \, g_{i
\al}^{(1)} \, \mu_1^{i \al} \, + \, g_{\al \be}^{(1)} \mu_1^{\al
\be} & = & 0 \, .
\end{eqnarray*}
\noindent We are looking for a symmetric tensor $\mu$ which is trace
free with respect to the metric $\gep$. To do that we set
\begin{eqnarray}
\mu^{ij} & = & a(t) \, \cdot \, \mu_1^{ij} \nonumber \\
\mu^{i \al } & = &  a(t) \, \cdot \, \mu_1^{i \al} \\
\mu^{\al \be} & = & b(t) \, \cdot \, \mu_1^{\al \be} \, ,\nonumber
\end{eqnarray}
\noindent where $a$ and $b$ are smooth radial cut-off functions
which are equal to one on $M_1$ and which vanish for $t \, > \, c \,
\log \ep \, + \, 1$, where $0\, < \, c \, < \, 1$ is a constant that
will be determined in Section 4. The definitions of $a$ and $b$ are made more precise below. However, we remark that the warped cut-off
still guarantees the symmetry of $\mu$. Taking into account
(\ref{traceless1}) and the definition of the metric $\gep$ on the
region where $\mu$ is not identically zero, the condition $\tr{\gep}
\, \mu \, = \, 0 $ is equivalent to
\begin{eqnarray*}
0 & = & \label{traceless ep} a \, g^{(1)}_{ij} \, \mu_1^{ij} \, + \,
2 \, a \, g_{i \al}^{(1)} \, \mu_1^{i \al} \, + \, b \, \phi^2
\, g_{\al \be}^{(1)} \mu_1^{\al \be}
\,\,\, = \,\,\, \left[b \, \phi^2 \, - \, a \right] \, \cdot \, g_{\al
\be}^{(1)} \mu_1^{\al \be} \, ,
\end{eqnarray*}
where the normal conformal factor $\phi^2$ defined by
\begin{eqnarray*}
\phi^2 & := & \left[ 1 \, + \, \eta(-t)  \cdot  (u^{(2)}_\ep
/u^{(1)}_\ep) \right]^{\frac{4}{n-2}} \, .
\end{eqnarray*}

It is now straightforward to verify that one can always
choose two smooth cut-off functions satisfying the conditions above
and such that $a \, = \,  \phi^2 \, b $. In particular we will
choose $b \, \equiv \, 1$, for $\log \ep \, < \, t \, < \, c \, \log
\ep$. As a consequence, $a \, = \, b \, + \,
\mathcal{O}\big({\ex^{(n-2)t}}\big)$ on this interval and $\mu$
converges to $\mu_1$ on the compact sets of $M_1 \setminus K$ with
respect to the $\mathcal{C}^2$-topology.

Notice that since $\phi$ depends on $\ep$, also $a$ and $b$ depend on $\ep$. Nevertheless, they and their derivatives do admit an
$\ep$-uniform bound. Finally, let us
observe that
%
$\left|\dv{\gep} \, \mu \right|_{\gep} $ and
$\left|\nabla \, \dv{\gep} \, \mu \right|_{\gep}$ are $\mathcal{O}
\big({\ep^{n-2}}\big)$ near the boundary of the poly-neck and
$\ep$-uniformly bounded in the interior.

\section{The momentum constraint: existence of solutions}

The aim of this section is to provide existence and uniform {\em a priori} estimates for solutions to the equation
\begin{eqnarray}
\label{momentum}
L_{\gep} \, X & = & \sharp \, \dv{\gep} \, \mu \,\,  \quad \hbox{in} \,\, M_\ep \,,
\end{eqnarray}
where $g_\ep$ and $\mu$ are respectively the approximate solution metric and the approximate symmetric TT-tensor defined in the previous section. According to the strategy explained in the introduction, we will then obtain a family of TT-tensor by setting $\mu_\ep := \mu + \DI{\gep}X$. We recall that
the vector Laplacian - is defined as $L_{\gep} \, := \, (\DI{\gep})^* \, \cdot
\, \DI{\gep} \, = \, - \, \sharp \, \dv{\gep} \, \cdot \,
\DI{\gep}$. As it is easy to verify, $L_{\gep}$ is a linear elliptic
second order partial differential operator with smooth coefficients
and it is formally self-adjoint. We can think of the vector
Laplacian as acting between the spaces of sections of the tangent bundle with H\"{o}lder
regularity
\begin{eqnarray}
\label{action} L_{\gep} \,\, : \,\, \mathcal{C}^{k+2,\al}(M_\ep,
TM_\ep) \,\, \longrightarrow \,\, \mathcal{C}^{k,\al}(M_\ep,TM_\ep)
\, .
\end{eqnarray}
We recall that for a
general tensor field $T$ the $\mathcal{C}^k$ norm of $T$
is defined by
\begin{eqnarray}
\nor{T}{\mathcal{C}^k} & := & \sum_{j=0}^k \,\,  \sup_{M_\ep} \,\,
| \nabla^j T |_{\gep} \, ,
\end{eqnarray}
where $\nabla$ is the Levi-Civita connection of $\gep$, whereas the
H\"{o}lder seminorm of the $k$-th derivative with H\"older exponent $\al \in
(0,1)$ is defined by
\begin{eqnarray}
\label{holder seminorm} \big[ \nabla^k T \big]_{\al} & := &
\sup_{p \neq q} \,\, \frac{\left| \nabla^k T(p) \, - \, \nabla^k
T(q) \right|_{\gep}}{d_{\gep} (p , q)^\al} \, ,
\end{eqnarray}
where the distance $d_{\gep} (p  , q)$ is supposed to be smaller than
the injectivity radius and, with abuse of notation, the term $\nabla^k T (q)$ is interpreted as its parallel transport from $q$ to $p$ along the unique geodesic joining $p$ and $q$ (in order to give
sense to the subtraction which appears in the numerator above). The
definition of the $\mathcal{C}^{k,\al}$-H\"{o}lder norm is then given by
\begin{eqnarray}
\nor{T}{\mathcal{C}^{k,\al}} & := & \nor{T}{\mathcal{C}^{k}} \, + \, \big[ \nabla^k T \big]_{\al} \, .
\end{eqnarray}

As explained in \cite{Mazz 3}, the existence of solutions to \eqref{momentum} in the compact case (i.e., when both the summands $M_1$ and $M_2$ are compact and hence $M_\ep$ itself is compact) is an easy consequence of the Fredholm alternative. In fact, using integration by parts, it is immediate to check that the right hand side $\sharp \, \dv{\gep} \, \mu$ is $L^2$-orthogonal to ${\rm Ker} \, L_{\gep}^*$. The H\"older regularity of the solution follows then from standard elliptic regularity theory.

In the non compact case we need to use a more sophisticated functional setting in order to provide existence. More precisely, we are going to recall the definitions of asymptotically Euclidean (AE) and asymptotically hyperbolic (AH) Riemannian manifold, together with the notion of weighted H\"older spaces (at infinity). In this context it will be possible to quote some well known result about the mapping properties of the vector Laplacian, insuring the existence of solutions to equation \eqref{momentum} also in the non compact case.

\subsection{Asymptotically Euclidean manifold}
An asymptotically Euclidian (AE) Riemannian manifold $(M,g)$ is a manifold $M$
with a finite number of ends, each of them being diffeomorphic to the exterior
of a ball in $\RN$, and  such that, up to this diffeomorphism, the components of the  metric on each end verifies
$$
|g_{ij}-\delta_{ij}|\leq C r^{-\nu} \, ,
$$
along with appropriate decay of the derivatives. Here $\nu>0$ and $r$ is a function which is equal to the standard radius of $\mathbb{R}^n$ on each end and which is equal to the constant 1 on the remaining compact region of $M$.

The weighted H\"older spaces $r^{-\nu}C^{k,\alpha}_{AE}$ are given by the tensor fields such that the norm
\begin{eqnarray*}
\nor{\, \cdot \, }{C^{k,\al}_{-\nu}} & := & \nor{r^\nu \,  \cdot \,}{C^{k,\al}}
\end{eqnarray*}
is well definite and finite.
Roughly speaking a function (or a tensor field) $u$ is in $r^{-\nu}C^{k}_{AE}$ if $u\in C^{k}_{loc}$ and on each end $\nabla^{l}u=O(r^{-\nu-l})$, for all $l\leq k$. For further references on this topic, we point out that the spaces defined here coincide with the spaces denoted by $\Lambda^{k,\al}_{AE}$ in \cite{IMP}.

Since the model we want to reproduce is the `horizontal' Euclidean slice in Minkowski space, we will
assume that $\tau=0$, when at least one of the two summands in $M_\ep$ is AE. Because of the nature of the Hamiltonian constraint it is also natural to assume that
$$
\mu\in r^{-\nu/2-1}C^{k,\alpha}_{AE}(M,TM\otimes TM) \,
$$
in this case.

Before going to the isomorphism results, we recall that on an AE manifold $(M,g)$ the relevant part of the operator $L_g$ at infinity is given by
$$
L_{\mathbb{R}^m} \,\, := \,\, -\, \tfrac12 \, \Delta_{\mathbb{R}^m} \, + \, \tfrac{m-2}{2m} \, \rm{grad}_{\mathbb{R}^m}\circ \mbox{div}_{\mathbb{R}^m} \,.
$$
where the differential operators on the right hand side are computed with respect to the Euclidean metric, with the agreement that the Laplace operator here acts on each component of the vector field.

Now, exactly as in \cite[Proposition 13]{IMP}, we have
\begin{prop}
\label{iso AE}
Suppose that $(M,g)$ is an $m$-dimensional AE Riemannian manifold with
$2-m<-\nu<0$, then the vector Laplacian
$$
L_g : r^{-\nu}C^{k+2,\alpha}_{AE}(M, TM)\longrightarrow r^{-\nu-2}C^{k,\alpha}_{AE}(M,TM)
$$
is an isomorphism for every $k \in \mathbb{N}$.
\end{prop}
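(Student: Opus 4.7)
The plan is to establish the isomorphism via the standard three-step weighted Fredholm argument for geometric elliptic operators on AE manifolds (as developed by Cantor, Christodoulou--O'Murchadha, Chaljub-Simon--Choquet-Bruhat and Bartnik, and applied to the vector Laplacian in \cite{IMP}): prove Fredholmness of index zero, prove injectivity using the divergence structure $L_g = \DI{g}^*\DI{g}$, and deduce surjectivity from formal self-adjointness combined with weighted duality.

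\textbf{Step 1 (Fredholm property, index zero).} On each end of $M$ the operator $L_g$ differs from the Euclidean Lamé model $L_{\mathbb{R}^m}$ by coefficients decaying according to the AE assumption, so it is an admissible perturbation in the Lockhart--McOwen sense. Separating variables $X=r^s Y(\theta)$ and diagonalizing on vector-valued spherical harmonics, one computes that the indicial roots of $L_{\mathbb{R}^m}$ lie in $\{0,1,2,\ldots\}\cup\{2-m,1-m,-m,\ldots\}$, and in particular none of them is contained in the open interval $(2-m,0)$. The Lockhart--McOwen / Bartnik theory in Hölder spaces therefore yields that $L_g:r^{-\nu}C^{k+2,\alpha}_{AE}(M,TM)\to r^{-\nu-2}C^{k,\alpha}_{AE}(M,TM)$ is Fredholm for every $-\nu\in(2-m,0)$, with an index that is constant throughout this interval. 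Comparison with the model case on $\mathbb{R}^m$ (where the isomorphism can be seen directly via the explicit Green's function of the Lamé operator) shows that this index equals zero.

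\textbf{Step 2 (Injectivity).} Let $X\in r^{-\nu}C^{k+2,\alpha}_{AE}$ satisfy $L_g X=0$. Elliptic regularity at infinity together with the indicial analysis produces, on each end, an asymptotic expansion of $X$ in the indicial modes of $L_{\mathbb{R}^m}$. Since no indicial root of the model lies in the open interval $(2-m,0)$, and $X$ is constrained by hypothesis to decay at least as $r^{-\nu}$ with $-\nu$ inside that interval, the leading term of its expansion must occur at an indicial root $\leq 2-m$. This yields the improved pointwise bound $|X|_g=O(r^{2-m})$, with $|\nabla X|_g=O(r^{1-m})$, which makes the boundary flux on $\{r=R\}$ of order $O(R^{2-m})$ in the cutoff integration by parts
$$
0\,=\,\int_M\langle L_g X,X\rangle_g\,\dvolg\,=\,\int_M|\DI{g}X|^2_g\,\dvolg,
$$
so the boundary contribution vanishes as $R\to\infty$ and we conclude $\DI{g}X\equiv 0$. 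Hence $X$ is a conformal Killing vector field on $(M,g)$ which decays at infinity. But the conformal Killing fields of Euclidean space (the asymptotic model) are translations, rotations, dilations and special conformal transformations, none of which tends to zero at infinity; so the asymptotic expansion of $X$ on each end is forced to vanish identically, and unique continuation for the elliptic equation $\DI{g}X=0$ then gives $X\equiv 0$.

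\textbf{Step 3 (Surjectivity) and main obstacle.} Formal self-adjointness of $L_g$ in the unweighted $L^2$ pairing, together with the standard weighted duality for elliptic operators on AE manifolds, identifies the cokernel of $L_g$ at weight $-\nu$ with the kernel of $L_g$ at the dual weight $-\nu^* = \nu-m+2$. Since $0<\nu<m-2$ places $-\nu^*$ again in the admissible interval $(2-m,0)$, Step 2 applied at $-\nu^*$ gives vanishing of the cokernel and, combined with Step 1, the desired isomorphism. The main technical obstacle is the decay-improvement used in Step 2: one must upgrade the crude bound $|X|_g=O(r^{-\nu})$, which by itself would produce a nonvanishing boundary flux $O(R^{m-2-2\nu})$, to the sharp asymptotic $|X|_g=O(r^{2-m})$. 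This relies on the full asymptotic-expansion theory for solutions of translation-invariant elliptic systems with decaying lower-order perturbations, and is the one step that is not formal; the remainder of the argument then flows essentially automatically.
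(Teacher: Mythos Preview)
Your argument is correct and follows precisely the standard weighted Fredholm scheme (Lockhart--McOwen/Bartnik theory for Fredholmness and index, the divergence structure $L_g=\DI{g}^*\DI{g}$ plus decay improvement for injectivity, and weighted duality for surjectivity) that underlies the result in the references the paper invokes. The paper itself gives no independent proof: it simply cites \cite{IMP}, Proposition~13, together with \cite{Chr-Mur} (Lemma~3.1 and Theorem~3.5) and \cite{Bartnikmass}; your write-up is a faithful unpacking of that cited argument rather than a different route.
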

\begin{proof}
See \cite{IMP},
we just mention a misprint in the proof of Proposition 13,
the paper cited as the number [9] seems to be in fact the number [8] so \cite{Chr-Mur}
for us, especially Lemma 3.1 and Theorem 3.5 there, see  also \cite{Bartnikmass}.
\end{proof}

In the following, when at least one of the two components of $M_\ep$ is AE (and then $M_\ep$ itself is AE), we will systematically make use of the weighted functional setting introduced above in order to recover existence of solution to the equation \eqref{momentum}.

\subsection{Asymptotically Hyperbolic manifold}
An asymptotically hyperbolic (AH) manifold $(M,g)$ is the interior $M$ of a compact manifold  $\overline M$
with boundary $\partial M$, endowed with a metric $g=y^{-2}\overline g$, where $\overline g$ is a metric on $\overline M$
and $y$ is a defining function of the boundary $\partial M=y^{-1}(\{0\})$ such that $|dy|_{\overline g}=1$ on
$\partial M$. The terminology comes from the fact that, under this assumption, the sectional curvatures
of $(M,g)$ goes to $-1$ when approaching $\partial M$.

For a precise definition of the weighted H\"older spaces in the asymptotically hyperbolic context, we refer the reader to \cite{IMP}, with the only observation that the H\"older spaces denoted here by $y^{\nu}C^{k,\alpha}_{AH}$ coincide with $y^{\nu}\Lambda^{k,\alpha}_{AH}$ in \cite{IMP}. Hence, formally, a function (or tensor field) $u$ is in
$y^{\nu}C^{k,\alpha}_{AH}$ if $u\in C^{k,\alpha}_{loc}$ and, for all $l\leq k$, $\overline\nabla^{(l)}u=O(y^{\nu+l})$ near $\partial M$.

Since the model we want to reproduce, is the `horizontal' hyperbolic slice $\HN$ in Minkowski space, we will assume $\tau=m$, when at least one of the summands in the generalized connected sum is an AH manifold. Again, the Hamiltonian constraint suggests the assumption
$$
\mu\in y^{-1}C^{k,\alpha}_{AH} (M, TM \otimes TM).
$$
The isomorphism result in the AH context (see \cite{IMP} Proposition 16) reads
\begin{prop}
\label{iso AH}
Suppose that $(M,g)$ is an $m$-dimensional AH Riemannian manifold and suppose $0 < \nu < m+1$, then the vector Laplacian
$$
L_g:y^{\nu}C^{k+2,\alpha}_{AH}(M, TM)\longrightarrow y^{\nu}C^{k,\alpha}_{AH}(M,TM)
$$
is an isomorphism for every $k \in \mathbb{N}$.
\end{prop}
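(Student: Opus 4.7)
The plan is to mirror the argument of \cite[Proposition~16]{IMP}, whose key ingredient is the Fredholm theory for uniformly degenerate elliptic operators of $0$-type, see \cite{Chr-Mur} and in particular Theorems~3.1 and~3.5 there. The first step is to compute the indicial roots of $L_g$ at the conformal boundary. Since $L_g$ is a geometric second order $0$-elliptic operator, in a boundary collar of $(M,g)$ it is conjugate, modulo lower order errors, to its counterpart on hyperbolic space $\mathbb{H}^m$; decomposing a vector field into its components tangent and normal to $\partial M$ one can diagonalize the resulting indicial family explicitly, and the roots are found to be $0$ and $m+1$. The assumption $0<\nu<m+1$ thus places the weight strictly between two consecutive indicial exponents.

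Once the roots are identified, the Fredholm theorem for $0$-elliptic operators from \cite{Chr-Mur} yields that
\begin{equation*}
L_g \,:\, y^{\nu}C^{k+2,\alpha}_{AH}(M,TM) \,\longrightarrow\, y^{\nu}C^{k,\alpha}_{AH}(M,TM)
\end{equation*}
is Fredholm, with index locally constant in $\nu$ on the complement of the discrete set of exceptional weights. Combined with the formal self-adjointness of $L_g$ on $L^2(M,\dvolg)$ and the standard duality between weighted H\"older spaces in the $0$-calculus, the index is in fact identically zero throughout $(0,m+1)$.

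It then remains to prove injectivity. Suppose $X\in y^{\nu}C^{k+2,\alpha}_{AH}(M,TM)$ satisfies $L_g X=0$. Since $\nu>0$, both $X$ and $\DI{g}X$ decay at the conformal boundary fast enough for the boundary contributions to vanish, so the algebraic identity $L_g=-(\DI{g})^{\ast}\circ\DI{g}$ and integration by parts give
\begin{equation*}
0 \,=\, -\int_{M} \langle L_g X,\, X\rangle_g \,\dvolg \,=\, \int_{M} |\DI{g}X|_g^{2} \,\dvolg .
\end{equation*}
Thus $\DI{g}X\equiv 0$, i.e. $X$ is a conformal Killing vector field on $(M,g)$. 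But any conformal Killing field which decays at the conformal boundary of an AH manifold must vanish identically: such a field extends smoothly to a conformal Killing field of the compactified metric $\overline g$, and, vanishing on $\partial M$, it is determined by its $1$-jet there and hence is everywhere zero. Combined with the fact that the index is zero, this proves surjectivity and yields the desired isomorphism.

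The main technical obstacle is the explicit identification of the indicial roots of the vector Laplacian on $\mathbb{H}^m$ and the verification that the range $(0,m+1)$ lies in the gap between two consecutive indicial exponents; once that point is settled, the abstract $0$-calculus Fredholm theory and the integration by parts identity above combine in a routine way to conclude the proof.
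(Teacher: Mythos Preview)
The paper's own proof of this proposition consists entirely of the single line ``See \cite{AndChDiss}, \cite{Lee:fredholm}, and also \cite{Gicquaud1}''; no argument is written out.  Your sketch is therefore far more detailed than anything in the text, and the overall strategy you describe --- compute the indicial roots of $L_g$ at the conformal boundary, invoke the $0$-Fredholm package to get Fredholmness and index zero, then prove injectivity by integration by parts and the absence of decaying conformal Killing fields --- is indeed the mechanism that underlies those references and \cite[Proposition~16]{IMP}.

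There is, however, a citation error you should fix.  You attribute the Fredholm theory for uniformly degenerate $0$-elliptic operators to \cite{Chr-Mur}, but that paper (Christodoulou--O'Murchadha, \emph{The boost problem in general relativity}) treats asymptotically \emph{Euclidean} analysis and has nothing to do with conformally compact geometry.  In the paper's bibliography, the relevant sources for the AH $0$-calculus are \cite{Maz} and \cite{Lee:fredholm} (and the applications in \cite{AndChDiss}, \cite{Gicquaud1}); \cite{Chr-Mur} is what the paper invokes for the AE analogue, Proposition~\ref{iso AE}, and it appears you transplanted that reference by accident.

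One further point of care: the integration-by-parts step is not quite as immediate as you state.  For small $\nu>0$ a field $X\in y^{\nu}C^{k+2,\alpha}_{AH}$ need not lie in $L^2(M,\dvolg)$, since the volume density behaves like $y^{-m}$ near $\partial M$.  What makes the pairing $\int_M\langle L_gX,X\rangle_g\,\dvolg$ legitimate is a preliminary bootstrap: once the indicial roots are known, any kernel element with $\nu>0$ in fact decays at the rate of the larger root, and \emph{then} the boundary terms vanish.  This is routine in the $0$-calculus, but it should be said explicitly rather than absorbed into the phrase ``decay fast enough''.
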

\begin{proof}
See \cite{AndChDiss},  \cite{Lee:fredholm}, and also \cite{Gicquaud1}.
\end{proof}

In the following, when at least one of the two components of $M_\ep$ is AH (and then $M_\ep$ itself is AH), we will systematically make use of the weighted functional setting introduced above in order to recover existence of solution to the equation \eqref{momentum}.

\section{The momentum constraint: uniform {a priori} bound }
\label{uniform section}

In this section we will provide solutions to equation \eqref{momentum} with {\em a priori} bounds which do not depend on the parameter $\ep$. In analogy with \cite{Mazz 3}, this fact turns out to be crucial in the analysis of the Lichnerowicz equation, since it will allow us to control the squared pointwise norm of $\mu_{\gep} = \mu + D_{\gep} X$ in terms of the norm of $\mu$, which can be explicitly estimated.

It is clear that when $\ep$ tends to zero the geometry of the underlying manifold $M_\ep$ becomes closer and closer to a degenerate configuration, since the poly-neck size is shrinking down. On the other hand, it is clear that this phenomenon only takes place on the poly-neck region. Thus, to obtain the desired uniform bound, it is convenient to pass to a more sophisticated functional framework, by modifying the usual norms on the poly-neck region. We introduce then
a weighting function $\rho_\ep$ by setting $\rho_\ep \equiv 1$ on $M_i \setminus K^i_1$, $i=1,2$, and
$ \rho_\ep \, : = \, \ep \,
\cosh t$ for $(\log \ep) + 1 \, < \, t \, < \, -(\log \ep) - 1$, where $t$ is the poly-neck variable introduced in Section \ref{geom construction}. To complete the definition, we further assume $\rho_\ep$ to be a monotone radial smooth interpolation between
these regions.

Now, for $k \in \mathbb{N}$, $\al \in (0,1)$ and a weight parameter $\gamma \in \mathbb{R}$, we define the weighted
$\mathcal{C}^k$-norms and the weighted H\"{o}lder $(k,\al)$-seminorms of a tensor field $T$ on $(M_\ep,g_\ep)$. Let us set
\begin{eqnarray*}
\nor{T}{\mathcal{C}^k_{\gamma}} & := & \sum_{j=0}^k \,\,
\sup_{M_\ep} \,\,  \rho_\ep^{-\gamma + j} \, \cdot \,  \big|
\nabla^j T \big|_{\gep}  \\
\big[\nabla^k T \big]_{\alpha,\gamma} & := & \sup_{p \neq q} \,\,  \, \left|
\rho_\ep (p) \wedge \rho_\ep(q)\right|^{-\gamma + k} \, \cdot \,
\frac{\left| \nabla^k T(p) \, - \, \nabla^k T(q)
\right|_{\gep}}{d_{\gep} (p\, , \, q)^\al}  \, ,
\end{eqnarray*}
where $\nabla$ indicates the Levi-Civita connection of $\gep$, $
\left| \rho_\ep (p) \wedge \rho_\ep(q)\right|  $ is the minimum
between $\rho_\ep(p)$ and $\rho_\ep(q)$, and the conventions used in
(\ref{holder seminorm}) to define the H\"older ratio are still valid here. The weighted H\"older norm is then defined as
$$
\nor{T}{\mathcal{C}^{k,\al}_{\gamma}} \, := \,
\nor{T}{\mathcal{C}^k_{\gamma}} \, + \, \big[ \nabla^k
T \big]_{\al,\gamma} \,.
$$
The $C^{k,\al}_\gamma$-H\"older space is given by the tensor fields on $M_\ep$ with $(k,\al)$-regularity such that the above norm is well defined and finite.

We point out that for any fixed $\ep$ such a norm is equivalent to the standard $C^{k,\al}$-H\"older norm and thus the weighted spaces introduced here coincide with the usual $C^{k,\al}$-H\"older spaces, since the standard definition has been modified only on a compact region. Nevertheless, these new spaces will reveal to be the suitable ones in the analysis of the singular limit for $\ep\rightarrow 0$.

To treat all the possible situations at once, we denote by
$E^{k,\al}_\delta(M_\ep,TM_\ep)$ the space of vector fields on $M_\ep$ with $(k,\alpha)$-regularity endowed with $\delta$-weighted norm in the poly-neck region, $\delta \in \mathbb{R}$, and eventually with the suitable $\nu$-weighted norm on the AE or AH ends. Of course, the weight parameter $\nu$ at infinity is fixed according to the conventions of the previous section. Moreover, the fact that for any fixed $\ep$ the $C^{k, \al}_\delta$-norm is equivalent to the $C^{k,\al}$-norm insures that the isomorphism results still hold true in both the compact and the non compact case. In this context it is convenient to think of $L_{\gep}$ as acting between the spaces
\begin{eqnarray}
\label{weighted action} L_{\gep} \,\, : \,\, E^{k+2,\al}_\delta (M_\ep, TM_\ep) \,\, \longrightarrow
\,\, E^{k,\al}_{\delta - 2}(M_\ep,TM_\ep) \, ,
\end{eqnarray}
for a suitable weight $\delta \in \mathbb{R}$. We can now state the
following:
\begin{prop}
\label{uniform estimate} Let $X \, \in \,  \, E_\delta^{k+2,\al}(M_\ep, TM_\ep)$ and $W \, \in \, E_{\delta -2}^{k,\al}(M_\ep,TM_\ep)$
be vector fields satisfying the equation $L_{\gep} \, X \, = \, W$.
Moreover, suppose that $W$ is of the form $W \, = \, \sharp \,
\dv{\gep} \, \mu$, for some symmetric $2$-tensor $\mu$. Then for any $2-n < \delta < (2-n)/2$, there
exist a real number $\ep_0 \in (0,1)$ and a positive constant $C>0$ such that for every $\ep \in (0 , \ep_0)$
\begin{eqnarray}
\label{vector a priori}
\nor{X}{E^{k+2, \al}_\delta} & \leq & C \, \cdot \,
\nor{W}{E^{k,\al}_{\delta-2}} \, .
\end{eqnarray}
(Remember that $n$ is the codimension of $K$ in $M_i$, $i=1,2$ and
it is supposed to be at least $3$).
\end{prop}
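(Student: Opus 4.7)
I would prove the estimate by contradiction, via a blow-up and rescaling argument in the spirit of the compact construction of \cite{Mazz 3}. Existence of a solution $X_\ep$ for each fixed $\ep$ is already granted by the Fredholm alternative in the compact case and by Propositions \ref{iso AE}--\ref{iso AH} in the AE/AH case, so the real task is to show that these solutions do not blow up as $\ep\to 0$. Suppose the contrary: there exist sequences $\ep_j\searrow 0$, $W_j=\sharp\,\dv{g_{\ep_j}}\mu_j$ and solutions $L_{g_{\ep_j}} X_j=W_j$ with $\|X_j\|_{E^{k+2,\al}_\delta}=1$ but $\|W_j\|_{E^{k,\al}_{\delta-2}}\to 0$. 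Select points $p_j\in M_{\ep_j}$ at which the weighted $C^0_\delta$-norm of $X_j$ is essentially attained; the crux is to control the accumulation of $\{p_j\}$.

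\textbf{Rescaling and limit problems.} Up to subsequence, the points $p_j$ fall into one of three regimes: (i) $p_j$ remains in a compact subset of $M_i\setminus K$, or escapes along an AE/AH end in a way compatible with the weight at infinity; (ii) $p_j$ enters the poly-neck with $\rho_{\ep_j}(p_j)/\ep_j\to+\infty$; (iii) $p_j$ sits near the centre of the poly-neck with $\rho_{\ep_j}(p_j)/\ep_j$ bounded. In each case I would rescale the metric by $\rho_{\ep_j}(p_j)^{-2}$ and the vector field by $\rho_{\ep_j}(p_j)^{-\delta}$, using Fermi coordinates centred on $p_j$ and the explicit form of $g_\ep$ recalled in Section \ref{geom construction}. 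By standard elliptic interior estimates, the rescaled sequence converges on compacta, modulo extraction, to a non-trivial limit $X_\infty$ annihilated by the corresponding model operator: in (i) $L_{g_i}X_\infty=0$ on $(M_i,g_i)$, which forces $X_\infty\equiv 0$ either by the nondegeneracy hypothesis in the compact case or by the injectivity statements of Propositions \ref{iso AE}--\ref{iso AH} in the AE/AH case; in (ii) $L_{\mathbb{R}^m}X_\infty=0$ on $\mathbb{R}^k\times(\mathbb{R}^n\setminus\{0\})$ with prescribed homogeneity at the origin; and in (iii) the same flat operator on the poly-neck model $\mathbb{R}^k\times(\mathbb{R}^n\setminus\{0\})$, viewed as a manifold with two asymptotically Euclidean ends. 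The assumption $2-n<\delta<(2-n)/2$ places $\delta$ strictly between two consecutive indicial weights of $L_{\mathbb{R}^m}$ on the punctured cross-section, which together with Fourier decomposition along the $\mathbb{R}^k$ factor rules out any non-trivial weighted solution; hence $X_\infty\equiv 0$. This contradicts the fact that $|X_\infty|$ is bounded below at the rescaled image of $p_j$ by the normalization, and proves \eqref{vector a priori}.

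\textbf{Main obstacle.} The delicate point is regime (iii), where the geometry of $M_{\ep_j}$ degenerates genuinely. One has to check that the rescaled metrics $\rho_{\ep_j}(p_j)^{-2}g_{\ep_j}$ converge on compacta to the flat product on $\mathbb{R}^k\times(\mathbb{R}^n\setminus\{0\})$, which requires the cut-off structure of $u_\ep$ and $g_\ep$ from Section \ref{geom construction} together with the Fermi expansions $g^{(i)}_{\al\be}=\delta_{\al\be}+\bigo{|x|^2}$ and $g^{(i)}_{j\al}=\bigo{|x|}$, and then to exclude non-trivial homogeneous solutions of $L_{\mathbb{R}^m}$ in the weight class on \emph{both} ends of the poly-neck simultaneously. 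The specific structure $W=\sharp\,\dv{\gep}\mu$ of the right-hand side enters through the integral identity $\int\langle L_{\gep}X,X\rangle_{\gep}=\int|\DI{\gep}X|^2_{\gep}=-\int\langle \mu,\DI{\gep}X\rangle_{\gep}$, which on one hand couples the $E_\delta$-control on $X$ to the $E_{\delta-2}$-control on $W$ with the correct duality, and on the other hand supplies the orthogonality to conformal Killing fields that makes the contradiction argument consistent in regime (i).
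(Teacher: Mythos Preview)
Your overall strategy --- contradiction, normalization, blow-up at points where the weighted norm is attained, case analysis according to where $p_j$ accumulates --- is exactly the paper's, and regimes (i) and (ii) are handled correctly (they correspond to Cases~1a/1b and Case~3 in the paper). The gap is in regime~(iii), the centre of the poly-neck where $\rho_{\ep_j}(p_j)/\ep_j$ stays bounded. There the rescaled limit metric is \emph{not} flat: from the explicit form of $u_\ep$ in Section~\ref{geom construction} one has, after dividing by $\ep_j^2$, the product $\mathbb{R}^k\times\mathbb{S}^n$ where $\mathbb{S}^n$ is the $n$-dimensional Riemannian Schwarzschild space $\big(\mathbb{R}_t\times S^{n-1},\,[\cosh(\tfrac{n-2}{2}t)]^{4/(n-2)}(dt^2+g_{S^{n-1}})\big)$. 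So the limit operator is $L_{\mathbb{R}^k\times\mathbb{S}^n}$, not $L_{\mathbb{R}^m}$, and your indicial-weight remark does not apply directly.

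Ruling out non-trivial decaying solutions of $L_{\mathbb{R}^k\times\mathbb{S}^n}X=0$ with $|X|\le B(\cosh t)^\delta$ is the genuinely delicate step and occupies most of the paper's proof. One splits $X=U+V$ into $T\mathbb{R}^k$ and $T\mathbb{S}^n$ components, takes the Fourier transform in $z\in\mathbb{R}^k$ to obtain a one-parameter family $\hat L(\zeta)$ of operators on $\mathbb{S}^n$, and integrates $\langle\hat L(\zeta)\hat X,\hat X\rangle$ by parts over $\mathbb{S}^n$. The condition $\delta<(2-n)/2$ makes three of the four boundary terms vanish as $T\to\infty$, but a mixed term $\hat V^t\langle\zeta,\hat U\rangle\,[\cosh(\tfrac{n-2}{2}t)]^{2n/(n-2)}$ survives for $\zeta\ne0$; to kill it one must first show that $\hat U$ actually decays like $(\cosh t)^{\delta-1}$ rather than $(\cosh t)^\delta$. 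This improved decay is obtained by projecting the $\hat U$-equation onto $\zeta$ and $\zeta^\perp$, using the maximum principle for the $\zeta^\perp$ part, and then solving a Bessel-type ODE system for the $\zeta$-component with right-hand side $\dv{\mathbb{S}^n}\hat V=O((\cosh t)^{\delta-1})$. Only after this does the integration by parts close and force $\hat U=\hat V=0$. Your sketch misses both the Schwarzschild limit and this extra decay step, so regime~(iii) as written does not go through.
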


\begin{proof} The proof is analogous to the prove of \cite[Proposition 2]{Mazz 3}. Nevertheless, we recall it here in oder to discuss the changes which are needed in the non compact cases, and possibly to simplify the argument. Here we only prove the uniform weighted $C^0$-bound, namely
\begin{eqnarray}
\label{vector C0}
\nor{X}{E^{0}_\delta} \,\, \leq \,\, C \, \cdot \,
\nor{W}{E^{0}_{\delta-2}} \, .
\end{eqnarray}
The estimate \eqref{vector a priori} will follow from local Schauder estimates for elliptic systems in divergence form (see \cite{Gia-Mar}, here we take advantage of the fact that $W= \sharp \dv{\gep} \mu$) combined with standard scaling arguments.

We proceed by contradiction. If \eqref{vector C0} does not hold, then it is possible to find a sequence $(\ep_j, X_j, W_j)$, $j \in \mathbb{N}$, such that
\begin{enumerate}
\item $\ep_j \, \rightarrow \, 0 \, ,$ \,\,\, $j \, \rightarrow \, + \infty$
\item $L_{\ep_j} \, X_j \, = \, W_j \, ,$\,\,\, $j \in \mathbb{N}$
\item $\nor{X_j}{E^0_\delta} \, = \, 1\, ,$ \,\,\,\, $j \in \mathbb{N}$ \quad and \quad $\nor{W_j}{E^{0}_{\delta-2}}  \rightarrow
  0\,,$\,\,\, $j \, \rightarrow \, + \infty$\, .
\end{enumerate}
For every $j\in \mathbb{N}$ we consider now a point $p_j \in M_{\ep_j}$ such that the pointwise norm $|X_j|(p_j)$ eventually multiplied by weighting function raised to the power indicated by the weight is greater or equal than $1-\ep_j$. For example, if the point $p_j$ lies in the region where $\rho_{j}$ is different from $1$, we are assuming that $\rho_j^{-\delta} |X_j| \,(p_j) \geq 1- \ep_j$. If $M_{\ep_j}$ has an AE end and the point $p_j$ is near infinity, we are assuming that $r^\nu |X_j|(p_j) \geq 1- \ep_j$ and so on. The fact that one can find such a point for each $j \in \mathbb{N}$ follows from the third hypothesis in the argument by contradiction. Depending on the behavior of the sequence $\{p_j \}$, we are going to distinguish several cases.

\smallskip

\textbf{Case 1a.} Suppose that up to a subsequence the points $p_j$'s lie in a compact region of $M_i \setminus K$ for some $i=1,2$. Without loss of generality we only consider the case $i=1$, since the other one is identical.

In analogy with \cite{Mazz 3}, we have that as $j \rightarrow + \infty$, the sequence of vector fields $X_j$ converges to a nontrivial weak solution $X$ of the  homogeneous problem
\begin{eqnarray}
  \label{Lg1}
  \left\{
    \begin{array}{lll}
      L_{g_1} \, X \,\, = \,\, 0 \quad \hbox{on $M_1\setminus K$} &  \\
      & \\
      |X|_{g_1}(\cdot) \,\, \leq \,\, A  \,\,\, |d_{g_1}(\, \cdot \, ,K)|^{\delta} \quad & 
    \end{array}
  \right.
  \end{eqnarray}
where $ d_{g_1}(\, \cdot \, ,K)$ represents the Riemannian distance to the submanifold $K$, and $A > 0$ is a positive constant. Without discussing all the details, we just recall that this follows from the fact that in our geometric construction the metrics $g_\ep$'s converge to the original metric $g_1$ on the compact subset of $M_1 \setminus K$. The growth prescription near $K$ is a direct consequence of the weighted setting introduced on the poly-neck region.

As it is easy to verify, the condition $2-n \, < \, \delta
$ guarantees that $X$ verifies the equation $L_{g_1}\, X \,  = \, 0$
on the whole $M_1$ in the sense of distributions, and thanks to the elliptic
regularity we deduce that $X$ can be extended through the singular set $K$ to a nontrivial smooth solution of $L_{g_1} X=0$. If $M_1$ is either AE or AH this contradicts the isomorphism results discussed in the previous section. If $M_1$ is compact, we integrate by parts obtaining
\begin{eqnarray*}
0  \quad = \quad \big( \, L_{g_1} \, X \,  , \, X \, \big)_{L^2} \quad
= \quad {}^{g_1} \nor{\DI{g_1}\, X}{L^2}^2 \, .
\end{eqnarray*}
Hence $X$ is a nontrivial conformal Killing vector field on $M_1$,
which is excluded by the nondegeneracy condition.

\smallskip

\textbf{Case 1b.} Suppose that $M_\ep$ is non compact and that, up to a subsequence, the points $p_j$ leave every compact subset of $M_\ep$. Again, without loss of generality, we assume that the points $p_j$'s of the subsequence lie on the piece of $M_\ep$ coming from $M_1$.

For every fixed compact subset $Q$ of $M_1$ we can suppose that $\rho_j^{-\delta}|X_j| \rightarrow 0$ on $Q$ when $j \rightarrow +\infty$, otherwise we are either in {\em Case 1a} discussed above or in {\em Case 2} or {\em 3} discussed below. We consider now a cut-off function $\varphi$ which is identically equal to 1 on $M_1 \setminus K^1_2$ and which vanishes on $K^1_1$, with the notations introduced in the introduction. Setting $Y_j := \varphi X_j$ and $Z_j := \varphi W_j \, + \, \big[L_j , \varphi  \big] \, X_j$, where $[\cdot , \cdot\cdot]$ is the formal commutator, we obtain $L_{g_1} Y_j = Z_j$. Moreover, for every $j \in \mathbb{N}$, both the vector fields $Y_j$ and $Z_j$ can be regarded as vector fields defined on $M_1$. Furthermore, using the second part of hypothesis 3 together with local Schauder estimates on $K^1_2$, it is easy to deduce that $Z_j \rightarrow 0$ in $E^0_0(M_1,TM_1)$, as $j \rightarrow 0$. On the other hand $\nor{X_j}{E^0_0(M_1 , TM_1)} \geq 1/2$ for large enough $j$. This contradicts the isomorphism results of the previous section.

\smallskip

\textbf{Case 2.} Suppose that up to a subsequence and without loss of generality the points $p_j$'s verify $d_{g_1} (p_j ,K) = \bigo{\ep_j}$, as $j \rightarrow + \infty$. This means that they are converging to $K$ with the largest velocity allowed.

Performing the same blow-up as in \cite[Proposition 2]{Mazz 3}, we have that the vector fields $X_j$ converge to a nontrivial weak solution $X$ of the homogeneous problem
 \begin{eqnarray}
  \label{Lrksn}
  \left\{
    \begin{array}{lll}
      L_{\mathbb{R}^k \times {\mathbb{S}}^n} \, X \,\, = \,\, 0 \quad \hbox{on $\mathbb{R}_z^k \times \mathbb{S}_{t, \theta}^n$} & \\
      & \\
      |X|_{\mathbb{R}^k \times {\mathbb{S}}^n} \,\, \leq \,\, B \,\,\, \left( \cosh t \right)^{\delta} \quad
      &\\
    \end{array}
  \right.
  \end{eqnarray}
  where $B>0$ is a positive constant and $\mathbb{S}^n$ denotes the $n$-dimensional (Riemannian) Schwarzschild space, $n \geq 3$. For sake of completeness we recall that the space $\mathbb{S}^n$ is diffeomorphic to the standard cylinder $\mathbb{R}_t \times S^{n-1}_\theta$ and it is endowed with the metric
$$
g_{\mathbb{S}^n} \,\, = \,\, \big[ \cosh \big( \tfrac{n-2}{2} t\big)  \, \big]^{\frac{4}{n-2}} \, \cdot \, dt^2 + g_{S^{n-1}} \, .
$$
We also point out that the expressions $L_{\mathbb{R}^k \times \mathbb{S}^n}$ and $|\,\cdot\,|_{\mathbb{R}^k \times \mathbb{S}^n}$ indicate respectively the vector Laplacian and the pointwise norm of the product metric $g_{\mathbb{R}^k}  +  \, g_{\mathbb{S}^n}$.

In order to simplify the argument presented in \cite{Mazz 3} and getting the desired contradiction, we split the vector field $X$ into the $T\mathbb{R}^k$ component and the $T\mathbb{S}^n$ component. Thus, we write $X=U+V$, where
\begin{eqnarray*}
U
\,\, = \,\, U^j
(z, t, \theta)
\cdot   {e}_j
& \quad \hbox{and} \quad  & V
\,\, = \,\, V^{t}(z, t, \theta) \,\, \frac{\partial}{\partial t} \, + \, V^\mu (z, t, \theta) \,\,\frac{\partial}{\partial \theta^\mu} \,.
\end{eqnarray*}
Here $\{e_1, \ldots , e_k \}$ is the standard basis on $T\RK \simeq \RK$.
The action of $L_{\mathbb{R}^k \times \mathbb{S}^n}$ is then given by

\medskip

$
{L}_{\RK \times \EN}\, {X} \,\,\, = \,\,\,$

\vspace{-0.2cm}

$$
  \left( \begin{array}{cc}
  \quad \quad L_\RK \,  -  \tfrac{1}{2} \, \Delta_\EN
 \quad & - \,
\big(\tfrac{m-2}{2\,m}\big)\,\, {\rm grad}_{\RK} \circ \dv{\EN} \\
 - \,
\big(\tfrac{m-k}{k m} \big)
\, {\rm grad}_\RK  \circ \, \dv{\RK}
 &  \\
& \\
    -
\big(\tfrac{m-2}{2\,m}\big) \, {\rm grad }_{\EN} \circ \dv{\RK}   &  L_{\EN} \,
 - \tfrac{1}{2}   \Delta_{\RK}  \\
 & - \big(\tfrac{m-n}{n m}\big) \, {\rm
grad}_{\EN} \circ \dv{\EN}  (\, \cdot \,)
 \\
  \end{array} \right)  \,\,
  \left( \begin{array}{c}
    \\
     U \\
    \\
     V \\
    \\
  \end{array} \right)
$$
where $\Delta_{g}\, W$ indicates the (negative
definite) Laplace-Beltrami operator of a Riemannian metric $g$ applied to the components of the vector field $W$.

We observe that, by the elliptic regularity, the components $U^j, V^t$ and $V^\mu$ are smooth functions which verify the decay prescription when $|t| \rightarrow + \infty$ and which are uniformly bounded in the $z$-direction. To proceed with the analysis, it is then convenient to consider these components as $C^{2, \al}_\delta$-valued tempered distributions, namely as elements in
$$
\mathcal{S}' \big(\RK,C^{2, \al}_\delta(\mathbb{S}^n) \big) \,.
$$
Now, we are going to take the Fourier transform along the $z$ variable
\begin{eqnarray*}
\hat{U}
\,\, = \,\, \hat{U}^j
(\zeta , t, \theta)
\cdot e_j & \quad \hbox{and} \quad  & \hat{V}
\,\, = \,\, \hat{V}^{t}(\zeta, t, \theta) \,\, \frac{\partial}{\partial t} \, + \, \hat{V}^\mu (\zeta, t, \theta) \,\,\frac{\partial}{\partial \theta^\mu} \,\, .
\end{eqnarray*}
Notice that $\hat U$ and $\hat V$ verify the decay prescriptions
$$
|\hat{U}|_{\RK} (\zeta, t ,\theta) \,\, \leq \,\, B \, (\cosh t)^\delta \quad  \,\, \hbox{and} \,\, \quad |\hat V|_{\mathbb{S}^n} (\zeta, t ,\theta)\,\, \leq \,\, B \, (\cosh t)^{\delta}
$$
along with their derivatives.

We have now at hand the family of operators $\hat L
(\zeta)$ acting on $\hat X (\zeta, t, \theta) = \hat U (\zeta, t ,\theta) + \hat V (\zeta, t, \theta)$ in the following way
$$
\hat{L}(\zeta) \, \hat{X} \,\,\, = \,\,\,
  \left( \begin{array}{cc}
    -  \tfrac{1}{2} \, \Delta_\EN
\,  + \, \tfrac{1}{2}  \, |\zeta|^2  \quad & - i \,
\big(\tfrac{m-2}{2\,m}\big)\,\, \dv{\EN}(\,\cdot \,) \, \,
\zeta    \\
 \quad \quad  + \, \big(\tfrac{m-2}{2\,m}\big)\,\, \langle \zeta ,
\, \cdot \, \rangle
\,\, \zeta  \, &  \\
& \\
    -  i
\big(\tfrac{m-2}{2\,m}\big) \, {\rm grad }_{\EN}
\langle \zeta , \, \cdot \, \rangle   &  L_{\EN} \,
 +  \tfrac{1}{2}   |\zeta|^2  \\
 & - \big(\tfrac{m-n}{n m}\big) \, {\rm
grad}_{\EN} \circ \dv{\EN}  (\, \cdot \,)
 \\
  \end{array} \right)  \,\,
  \left( \begin{array}{c}
    \\
    \hat U \\
    \\
    \hat V \\
    \\
  \end{array} \right)
$$
Clearly $\hat L (\zeta) \hat X = 0$. If we show that, for every $\zeta \in \RK$, $\hat L (\zeta) \, \hat X = 0$ implies $\hat X = 0$, we will also have that $X$ must be identically $0$, reaching a contradiction. To show that this is actually the case, we are going to integrate by parts the expression
$$
0 \,\,\, = \,\,\, \big( \hat L (\zeta) \, \hat X , \hat X  \big)_{L^2} \, .
$$
A direct computation shows that
\begin{eqnarray*}
0 & = &  \, \int_{\EN} \hbox{$\sum^k_{j=1}$} \, \big|  {\rm
grad_{\EN}} \, \hat U^j \big|_\EN^2  + \, \big(
\tfrac{m-2}{m} \big) \,  \big| \langle \zeta, \hat U
\rangle \big|^2  + |\zeta|^2 \,  |\hat U |^2_\RK  \,
\, \dvol{\EN} \\
&  &+  \int_{\EN} 2 \, \big| \DI{\EN} \hat V \big|_\EN^2  +
\, 2 \, \big( \tfrac{m-n}{ n m} \big)  \big| \dv{\EN
} \hat V \big|_\EN^2  + |\zeta|^2 \, |\hat V|_\EN^2  \, \, \dvol{\EN} \, \\
& & - \lim_{T\rightarrow+\infty} \int_{S^{n-1}}  \left.    \hbox{$\sum_{j=1}^k$} \,  \hat U^j \, \partial_t \hat U^j \, \big[ \cosh{\tfrac{n-2}{2}t} \big]^{2} \, \right|^{+T}_{-T} \,\, \dvol{S^{n-1}} \\
& & - \lim_{T\rightarrow+\infty}  2 \, \big(\tfrac{m-n}{nm}\big) \int_{S^{n-1}}  \left.    \hat V^t \, \dv{\EN}\hat V \, \big[ \cosh{\tfrac{n-2}{2}t} \big]^{\frac{2n}{n-2}} \, \right|^{+T}_{-T} \,\, \dvol{S^{n-1}} \\
& & - \lim_{T\rightarrow+\infty}  2 \int_{S^{n-1}}  \left.    \DI{\EN} \hat V \, ( \, \hat V , \partial_t\,) \, \big[ \cosh{\tfrac{n-2}{2}t} \big]^{2} \, \right|^{+T}_{-T} \,\, \dvol{S^{n-1}} \\
& & - \lim_{T\rightarrow+\infty}  \, i \, \big(\tfrac{m-2}{m}\big) \int_{S^{n-1}}  \left.    \hat V^t \, \langle \zeta , \hat U \rangle \, \big[ \cosh{\tfrac{n-2}{2}t} \big]^{\frac{2n}{n-2}} \, \right|^{+T}_{-T} \,\,\dvol{S^{n-1}} \,.
\end{eqnarray*}
Using the decay prescription and remembering that $\delta < (2-n)/2$ it is immediate to check that first three boundary terms tend to zero as $T \rightarrow + \infty$. Moreover, when $\zeta = 0 $, also the last boundary term gives no contribution and we deduce immediately that $\hat U$ is constant and $\hat V$ is a conformal Killing vector field on $\EN$. Since they decay at infinity, they must vanish everywhere, leading us to a contradiction.

Hence we suppose now $\zeta \neq 0$ and we show that also in this case the contribution of the last boundary term is zero. First we observe that the component $\hat V^t$ is controlled by $(\cosh t)^{\delta -1}$, whereas {\em a priori} the bound on $|\hat U|_{\RK}$ is just given by $(\cosh t)^{\delta}$. Since the third factor in the last integral goes like $(\cosh t)^n$, this is in general not sufficient to guarantee the vanishing of the entire boundary term in the limit for $T \rightarrow + \infty$. To overcome this difficulty, we are going to show that the decay of $\hat U$ is better than expected. In fact, from $\hat L (\zeta) \hat X = 0$ we have that
$$
 \Delta_\EN \, \hat U
\,  - \,  \, |\zeta|^2 \,  \hat U \,  - \, \big(\tfrac{m-2}{m}\big)\,\, \langle \zeta ,
\hat U \rangle \, \zeta \,\,\, = \,\,\, - i \,
\big(\tfrac{m-2}{m}\big)\,\, (\dv{\EN} \hat V ) \,
\zeta \,.
$$
Multiplying this equation by a vector $\eta 	\in \RK$ with $\langle \eta, \zeta \rangle = 0$, we obtain
$$
 \Delta_\EN \, \langle \hat U , \eta \rangle
\,  - \,  \, |\zeta|^2 \,  \langle \hat U , \eta \rangle \,\,\, = \,\,\, 0 \, .
$$
Since $\hat U$ decays at infinity, we can use the maximum principle to deduce that $\langle \hat U , \eta \rangle =0$ everywhere on $\EN$. As a consequence we write the vector field $\hat U (\zeta, t ,\theta)$ as $u (\zeta, t ,\theta) \, \zeta$, where $u$ is a suitable function verifying the decay prescriptions induced by $\hat U$. The equation above becomes then
$$
 \Delta_\EN \, u
\,   - \, \big(\tfrac{2m-2}{m}\big)\,\,  |\zeta|^2 \,
u   \,\,\, = \,\,\, - i \,
\big(\tfrac{m-2}{m}\big)\,\, (\dv{\EN} \hat V ) \,.
$$
In order to split this equation into its real part and its pure imaginary part, we set $u = \phi + i \psi$ and $\dv{\EN} \hat V = a+ib$, where, according to the decay prescription on $\hat V$, both the functions $a$ and $b$ are bounded by $(\cosh t)^{\delta - 1}$. We obtain then
\begin{eqnarray*}
\Delta_\EN \, \phi
\,   - \, \big(\tfrac{2m-2}{m}\big)\,\,  |\zeta|^2 \,
\phi  & = &  \,
\big(\tfrac{m-2}{m}\big)\,\, b \\
\Delta_\EN \, \psi
\,   - \, \big(\tfrac{2m-2}{m}\big)\,\,  |\zeta|^2 \,
\psi  & = &  \,
\big(\tfrac{2-m}{m}\big)\,\, a \,.
\end{eqnarray*}
Now, using separation of variables and translating the problem into an infinite dimensional system of ODE's, it is easy to obtain explicit solutions for this Bessel type equations. Moreover, both the solutions $\phi$ and $\psi$ are now of the same order of the right hand side, as $t \rightarrow + \infty$. Coming back to $\hat U$, we have that $|\hat U|_\RK$ is bounded by $(\cosh t)^{\delta -1}$. Hence also the last boundary term is vanishing in the limit for $T \rightarrow + \infty$. The integration by parts is then completely justified and it is now straightforward to deduce that also when $\zeta \neq 0$ the vector fields $\hat U$ and $\hat V$ are forced to be trivial, which is a contradiction.

\smallskip

\textbf{Case 3.} Suppose that up to a subsequence and without loss of generality the points $p_j$'s still converge to a point $p_\infty$ which lies in $K$, but with a lower velocity than in the previous case. This can be phrased by saying that $d_{g_1}(p_j, K) \, / \,\ep_j \, \rightarrow \, +\infty$, as $j \rightarrow + \infty$ (notice that, up to pass to a new subsequence, we can suppose that all the $p_j$'s lie in the side of $M_\ep$ which comes from $M_1$). Following \cite[Proposition 2]{Mazz 3}, we rescale our sequence of problems with the rate of convergence of the $p_j$'s, so that the blow-up limit is now given by
 \begin{eqnarray}
  \label{Lrkrn}
  \left\{
    \begin{array}{ll}
      L_{\mathbb{R}^k \times {\mathbb{R}^n}} \, X \, = \, 0 \quad & \hbox{on $\mathbb{R}_z^k \times \left(\mathbb{R}_x^n \setminus \{0\} \right)$ } \\
      & \\
      |X|_{\mathbb{R}^k \times {\mathbb{R}}^n} \, \leq \, C \, \, |x|^{\delta} \quad &
    \end{array}
  \right.
  \end{eqnarray}
where $C>0$ is a positive constant and $X$ is nontrivial.

This last case can be treated easily first by showing that the condition $2-n < \delta$ allows one to extend smoothly the solution through the singular set $\RK \times \{0\}$ in analogy with the {\em Case 1.a}, and then using the decay condition at infinity, with $\delta < (2-n)/2$, as in {\em Case 2} to justify the integration by parts which implies the vanishing of $X$ and thus the contradiction.
\end{proof}



\section{The energy constraint}

In this section we conclude the proof of Theorem \ref{teo 1} by showing that the Lichnerowicz equation can be solved
\begin{eqnarray}
\label{licn}
\Delta_{\gep} \, u \, - \, c_m \, R_{\gep} \, u \, + \, c_m \, |\,
\mu_{\ep} |_{\gep}^2 \, u^{- \frac{3m-2}{m-2}} \, - \, c_m \,
\tfrac{m-1}{m} \, \tau^2 \, u^{\frac{m+2}{m-2}} & = & 0 \, ,
\end{eqnarray}
provided the parameter $\ep$ is sufficiently small. Before starting, we recall that the terms $\mu_\ep$ is the sum of the `proto TT-tensor' $\mu$ defined in of Section \ref{geom construction} and the correction term $\DI{\gep} X$, where the vector field $X$ solves the equation \eqref{momentum} and hence by Proposition \ref{uniform estimate} is $\ep$-uniformly bounded in terms of $\mu$.

In analogy with \cite{Mazz 3}, the solution to \eqref{licn} will be obtained by perturbation. Since in Theorem \ref{teo 1} we claimed the convergence of the new solutions to the original ones, it is natural to linearize our equation about the constant 1. Hence we set $v:=u-1$ and we consider the linear operator
\begin{eqnarray}
\mathcal{L}_{\gep} & = & \Delta_{\gep} \, - \, \chi_1 \,
\left(|\mu_1|^2_{g_1} \, + \, {\tau^2}/{m}\right) \, - \, \chi_2 \, \left( |\mu_2|^2_{g_2} \, + \, \tau^2/m  \right) \, ,
\end{eqnarray}
where $\chi_1 := \chi$, $\chi_2 := 1-\chi$ and the smooth cut-off $\chi$ has been defined in Section \ref{geom construction}. In the following $\mathcal{L}_{\gep}$ will be referred as the linearized Lichnerowicz operator. For sake of completeness we recall that $\Delta_{\gep}$ can be written on the poly-neck as
\begin{eqnarray*}
\Delta_{\gep} & = & u_\ep^{-\frac{4}{n-2}} \big[ \, \partial^2_t \, + \, (n-2) \tanh \big( \tfrac{n-2}{2} t\big) \, \partial_t \, + \, \Delta_{S^{n-1}}  \, + \, u_\ep^{\frac{4}{n-2}} \Delta_{g_K} \, + \, P_\ep  \, \big] \, ,
\end{eqnarray*}
where $P_\ep$ is a second order partial differential operator whose coefficients are bounded by $\rho_\ep$. Finally, collecting all the remainders, we define the error term $F_\ep (v)$ by
\begin{eqnarray}
F_{\ep}(v) & := & - \, c_m \, (R_{g_1} - R_{\gep}) \, \chi_1 \, + \, c_m
\, (|\mu_1|^2_{g_1}  - |\mu_\ep|^2_{\gep}) \, \chi_1  \nonumber\\
& & - \, c_m \, (R_{g_2} - R_{\gep}) \, \chi_2 \, + \, c_m
\, (|\mu_2|^2_{g_2}  - |\mu_\ep|^2_{\gep}) \, \chi_2 \nonumber\\
& & - \,c_m \, (R_{g_1} - R_{\gep}) \, \chi_1 \, v \, - \, b_m
\, (|\mu_1|^2_{g_1}  - |\mu_\ep|^2_{\gep}) \, \chi_1 \,v \\
& & - \, c_m \, (R_{g_2} - R_{\gep}) \, \chi_2 \,v \, - \, b_m
\, (|\mu_2|^2_{g_2}  - |\mu_\ep|^2_{\gep}) \, \chi_2  \, v \nonumber\\
& & - \, c_m \, |\mu_\ep|^2_{\gep} \,  h(v) \, + \, c_m \,
\tfrac{m-1}{m} \, \tau^2 \,  f(v) \, , \nonumber
\end{eqnarray}
\noindent where $c_m = (m-2) / [4 (m-1)]$, $b_m =  c_m
(3m-2)/(m-2)$ and
\begin{eqnarray*} h(v) & := & \left[ (1+v)^{-
\frac{3m-2}{m-2}} - 1 +
\big( \tfrac{3m-2}{m-2}\big) v \right] \\
f(v) & := & \left[ (1+v)^{\frac{m+2}{m-2}} - 1 -
\big(\tfrac{m+2}{m-2}\big) v \right] \, .
\end{eqnarray*}
Hence both $h$ and $f$ are $\bigo{|v|^2}$.

In order to solve \eqref{licn}, we first prove  invertibility and {\em a priori} estimates for $\mathcal{L}_{\gep}$ and then we show that the fixed point problem
\begin{eqnarray}
\label{fixed}
v & = & \mathcal{L}_{\gep}^{-1} \circ F_\ep (v)
\end{eqnarray}
admits a solution.

\subsection{Analysis of the linearized Lichnerowicz operator}

We consider the following linear problem
\begin{eqnarray}
\label{lin licn}
\mathcal{L}_{\gep} \, v & = & w \quad \quad  \hbox{on $\,\,M_\ep$}\,.
\end{eqnarray}
Concerning the existence of solutions, we observe that in the compact case this is a consequence of the non vanishing of $\tilde \Pi_1$ and $\tilde \Pi_2$. In fact this implies that the operator $\mathcal{L}_{\gep}$ is negative definite, thus injective on $L^2(M_\ep)$. Since it is also self adjoint, we deduce by the Fredholm alternative that it is also surjective. Moreover, by elliptic regularity, we have that if the right hand side is in $C^{0,\al}(M_\ep)$, then the solution belongs to $C^{2,\al}(M_\ep)$. If one of the summands is either AE or AH, the existence of solutions is guaranteed by the following issues without any further assumption on $\tilde \Pi_i$, $i=1,2$ (the references for these results are the same as for Proposition \ref{iso AE} and \ref{iso AH}).
\begin{prop}
Suppose that $(M,g)$ is an $m$-dimensional AE Riemannian manifold with
$2-m<-\nu<0$, then the linearized Lichnerowicz operator
$$
\mathcal{L}_g : r^{-\nu}C^{k+2,\alpha}_{AE}(M)\longrightarrow r^{-\nu-2}C^{k,\alpha}_{AE}(M)
$$
is an isomorphism for every $k \in \mathbb{N}$.
\end{prop}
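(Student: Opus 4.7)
The plan is to view $\mathcal{L}_g$ as a compact perturbation of the bare Laplacian $\Delta_g$ on the weighted Hölder scale, and then derive the isomorphism statement from (i) the isomorphism property of $\Delta_g$ in the prescribed weight range, (ii) Fredholm theory, and (iii) an energy-based injectivity argument. Writing $\mathcal{L}_g = \Delta_g - V$ with
$$V := \chi_1\bigl(|\mu_1|^2_{g_1} + \tau^2/m\bigr) + \chi_2\bigl(|\mu_2|^2_{g_2} + \tau^2/m\bigr),$$
recall that in the AE setting the paper imposes $\tau=0$, so the potential reduces to $V = \chi_1|\mu_1|^2_{g_1} + \chi_2|\mu_2|^2_{g_2}\ge 0$.

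First I would quote, as the scalar counterpart of Proposition \ref{iso AE}, that for any AE manifold $(M,g)$ of dimension $m$ and weight $-\nu$ with $2-m<-\nu<0$ the scalar Laplacian
$$\Delta_g : r^{-\nu}C^{k+2,\alpha}_{AE}(M)\longrightarrow r^{-\nu-2}C^{k,\alpha}_{AE}(M)$$
is a topological isomorphism. This is again contained in the references \cite{Chr-Mur}, \cite{Bartnikmass}: the weight restriction precisely excludes the indicial roots $0$ and $2-m$ of $\Delta_{\mathbb{R}^m}$ at infinity.

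Next I would show that the zeroth-order multiplication by $V$ is a compact perturbation. The assumption $\mu \in r^{-\nu/2-1}C^{k,\alpha}_{AE}$ (from the Hamiltonian constraint discussion in the AE paragraph) yields $V\in r^{-\nu-2}C^{k,\alpha}_{AE}$. Multiplication by $V$ therefore sends $r^{-\nu}C^{k+2,\alpha}_{AE}(M)$ into $r^{-2\nu-2}C^{k,\alpha}_{AE}(M)$, and since $\nu>0$ the embedding of this latter space into $r^{-\nu-2}C^{k,\alpha}_{AE}(M)$ is compact by a standard weighted Arzelà–Ascoli argument on AE ends combined with Rellich on the compact core. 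Hence $\mathcal{L}_g$ is a compact perturbation of the isomorphism $\Delta_g$, and so it is Fredholm of index zero between the stated spaces.

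It remains to verify injectivity. If $v\in r^{-\nu}C^{k+2,\alpha}_{AE}(M)$ satisfies $\mathcal{L}_g v=0$, the decay $v=O(r^{-\nu})$ and $\nabla v=O(r^{-\nu-1})$ with $\nu>0$ make the boundary terms at infinity vanish when we pair the equation against $v$ and integrate by parts, giving
$$\int_M \bigl(|\nabla v|_g^2 + V\, v^2\bigr)\,\dvol{g}=0.$$
Since $V\ge 0$, this forces $v$ to be constant, and the decay at infinity then forces $v\equiv 0$. Fredholm index zero together with trivial kernel yields surjectivity and continuous invertibility. The main obstacle I expect is the verification of the weighted compactness of the multiplication by $V$ in the Hölder scale (including the Hölder seminorm at infinity); the remaining ingredients are either taken from the cited literature or reduce to the positivity argument above.
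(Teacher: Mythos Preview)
The paper itself gives no argument here beyond a citation to the same references as for Proposition~\ref{iso AE}, so your outline is in fact more detailed than what the paper provides, and the overall strategy (compact perturbation of $\Delta_g$ plus an energy argument for injectivity) is exactly the one underlying those references.

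There is, however, a genuine gap in your injectivity step. You claim that for $v=O(r^{-\nu})$, $\nabla v=O(r^{-\nu-1})$ with $\nu>0$ the boundary term at infinity vanishes. On an $m$-dimensional AE end the flux through $\partial B_R$ is of order
\[
\int_{\partial B_R} v\,\partial_n v\,dS \;=\; O\!\left(R^{-\nu}\cdot R^{-\nu-1}\cdot R^{m-1}\right)\;=\;O\!\left(R^{\,m-2-2\nu}\right),
\]
which tends to $0$ only when $\nu>(m-2)/2$. The proposition, however, allows any $\nu\in(0,m-2)$, so for $\nu\le(m-2)/2$ your integration by parts is not justified (and in fact $\int_M|\nabla v|^2$ need not even be finite). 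The remedy is a standard bootstrap that you omit: from $\Delta_g v = Vv$ with $V\in r^{-\nu-2}C^{k,\alpha}_{AE}$ and $v\in r^{-\nu}C^{k+2,\alpha}_{AE}$ one gets $Vv\in r^{-2\nu-2}C^{k,\alpha}_{AE}$; as long as $2\nu<m-2$ the isomorphism property of $\Delta_g$ at weight $2\nu$ yields $v\in r^{-2\nu}C^{k+2,\alpha}_{AE}$. Iterating, one reaches in finitely many steps a weight exceeding $(m-2)/2$, after which your energy identity is legitimate and forces $v\equiv 0$. A minor cosmetic point: the potential $V$ with the cut-offs $\chi_1,\chi_2$ is the expression for $\mathcal{L}_{g_\ep}$ on the glued manifold; on a single AE summand the operator is simply $\Delta_g-|\mu|^2_g$ (with $\tau=0$), which is of course still of the required form $\Delta_g-V$ with $V\ge 0$ and $V\in r^{-\nu-2}C^{k,\alpha}_{AE}$.
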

\begin{prop}
Suppose that $(M,g)$ is an $m$-dimensional AH Riemannian manifold with $0 < \nu < m+1$, then, if $\nu'\in(-1,m)$, the linearized Lichnerowicz operator
$$
\mathcal L:y^{\nu'}C^{k+2,\alpha}_{AH}(M)\longrightarrow y^{\nu'}C^{k,\alpha}_{AH}(M),
$$
is an isomorphism for every $k \in \mathbb{N}$.
\end{prop}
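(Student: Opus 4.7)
My plan is to follow the same pattern as Proposition \ref{iso AH} and invoke the Fredholm theory of uniformly degenerate elliptic operators on AH manifolds developed in \cite{AndChDiss}, \cite{Lee:fredholm} and \cite{Gicquaud1}. Two inputs must be supplied: first, the indicial roots of the leading-order model at conformal infinity and their position relative to the weight window $(-1,m)$; second, injectivity of $\mathcal{L}_g$ on at least one weight in this window.

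For the first point, with the AH normalization $\tau=m$ the operator reads
$$
\mathcal{L}_g \,=\, \Delta_g \,-\, \big(\chi_1|\mu_1|^2_{g_1}+\chi_2|\mu_2|^2_{g_2}\big)\,-\, m,
$$
and under the assumption $\mu_i\in y^{-1}C^{k,\alpha}_{AH}$ the coefficient $|\mu_i|^2_{g_i}$ is subleading at $\partial M$ relative to the constant $-m$, so the model operator at infinity is $\Delta_g-m$. A direct computation on the upper half-space model of $\mathbb{H}^m$, with $g=y^{-2}(dy^2+|dx|^2)$ and the negative-Laplacian convention used throughout the paper, gives $\Delta_{\mathbb{H}^m}y^s=s(s-m+1)y^s$. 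The indicial equation $(\Delta_{\mathbb{H}^m}-m)y^s=0$ therefore reads $s^2-(m-1)s-m=0$, with roots $s=-1$ and $s=m$. This pins down $\nu'\in(-1,m)$ as the Fredholm window: for such weights the cited theory gives $\mathcal{L}_g$ Fredholm of index zero, together with the indicial-root regularity statement that $\ker\mathcal{L}_g$ is independent of $\nu'$ inside the interval.

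For injectivity I would exploit the sign of the potential. The zero-order coefficient $V:=\chi_1|\mu_1|^2_{g_1}+\chi_2|\mu_2|^2_{g_2}+m$ is smooth, nonnegative, and uniformly bounded below by the positive constant $m$, so $-\mathcal{L}_g=-\Delta_g+V$ is coercive on $L^2(M,\dvolg)$. Since $\dvolg\sim y^{-m}$ near $\partial M$, the inclusion $y^{\nu'}C^{k+2,\alpha}_{AH}\subset L^2(M,\dvolg)$ holds as soon as $\nu'>(m-1)/2$; picking such a weight $\nu_\star$ in $(-1,m)$ and invoking the kernel-independence from the previous paragraph, any $v\in\ker\mathcal{L}_g$ can be paired against itself and integrated by parts against a smooth exhaustion of $M$. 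The boundary contributions vanish by the decay of $v$, leaving $\int_M(|\nabla v|^2_g+Vv^2)\,\dvolg=0$, hence $v\equiv 0$. Fredholmness of index zero then upgrades injectivity to the claimed isomorphism.

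The delicate ingredient is the indicial-root regularity used to transport the kernel from a weight $\nu'\in(-1,(m-1)/2]$, where the $L^2$ pairing is not immediately available, to a weight $\nu_\star>(m-1)/2$ where the Schr\"odinger-type energy argument applies. This however is standard output of the Melrose/Lee theory for uniformly degenerate operators, and I would simply quote it from the references above.
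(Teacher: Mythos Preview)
Your argument is correct and is precisely the content that the cited references supply; in fact you have given considerably more detail than the paper itself, which for this proposition merely points to the same sources as for Proposition~\ref{iso AH} (Andersson--Chru\'sciel, Lee, Gicquaud) without writing anything out. Your computation of the indicial roots of the model operator $\Delta_{\mathbb{H}^m}-m$ (using the AH normalisation $\tau=m$, so that $\tau^2/m=m$) is correct and explains the window $\nu'\in(-1,m)$, and your coercivity argument for injectivity is the standard one.

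Two minor remarks. First, the proposition is stated for a single AH manifold $(M,g)$, so the operator is simply $\mathcal{L}_g=\Delta_g-|\mu|^2_g-m$; the cut-offs $\chi_1,\chi_2$ belong to the glued operator $\mathcal{L}_{g_\ep}$ on $M_\ep$. This is a cosmetic slip and does not affect your reasoning, since in either case the zeroth-order term is $-m$ plus a nonnegative, asymptotically vanishing potential. Second, the fact that $|\mu|^2_g$ is indeed subleading at $\partial M$ (so that the indicial operator really is $\Delta_{\mathbb{H}^m}-m$) follows from the Hamiltonian constraint together with $R_g\to -m(m-1)$ and $\tau=m$; you might state this explicitly rather than only invoking the weighted-space membership of $\mu$.
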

We pass now to establish the {\em a priori} bound for solutions to \eqref{lin licn}. To treat all the possible situations at once, we denote by $F^{k,\al}_\gamma (M_\ep)$ the space of functions $M_\ep$ with $(k,\alpha)$-regularity endowed with $\gamma$-weighted norm in the poly-neck region, $\gamma \in \mathbb{R}$, and eventually with the suitable $\nu$ or $\nu'$-weighted norm on the AE or AH ends. The linearized Lichnerowicz operator is naturally defined between the spaces
\begin{eqnarray*}
\mathcal{L}_{\gep} \,\, : \,\, F^{k+2,\al}_\gamma (M_\ep) \,\, \longrightarrow
\,\, F^{k,\al}_{\gamma - 2}(M_\ep) \, ,
\end{eqnarray*}
for $\gamma \in \mathbb{R}$. We can now state the
following:
\begin{prop}
\label{uniform estimate licn} Let $v  \in  F_\gamma^{k+2,\al}(M_\ep)$ and $w  \in  F_{\gamma -2}^{k,\al}(M_\ep)$
be functions satisfying the equation $\mathcal{L}_{\gep} \, v \, = \, w$. Then, for any $2-n < \gamma < 0$, there
exist a real number $\ep_0 \in (0,1)$ and a positive constant $C>0$ such that for every $\ep \in (0,\ep_0)$
\begin{eqnarray}
\nor{v}{F^{k+2, \al}_\gamma} & \leq & C \, \cdot \,
\nor{w}{F^{k , \al}_{\gamma-2}} \, .
\end{eqnarray}
(Remember that $n$ is the codimension of $K$ in $M_i$, $i=1,2$ and
it is supposed to be at least $3$).
\end{prop}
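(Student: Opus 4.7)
My plan is to mirror very closely the argument of Proposition \ref{uniform estimate}, adapted to this scalar equation. I would first reduce to proving only the weighted $C^0$-bound
$$
\nor{v}{F^{0}_\gamma} \,\,\leq\,\, C \, \nor{w}{F^{0}_{\gamma-2}};
$$
the full $C^{k+2,\al}_\gamma$-estimate then follows from interior Schauder estimates combined with a standard scaling argument on the poly-neck, exactly as in the vector Laplacian case. Arguing by contradiction, I would assume the existence of a sequence $(\ep_j, v_j, w_j)_{j \in \mathbb{N}}$ with $\ep_j \rightarrow 0$, $\mathcal{L}_{\ep_j} v_j = w_j$, $\nor{v_j}{F^{0}_\gamma} = 1$ and $\nor{w_j}{F^{0}_{\gamma-2}} \rightarrow 0$, and select almost-maximizing points $p_j \in M_{\ep_j}$. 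Passing to subsequences, four scenarios must be treated depending on the asymptotic location of the $p_j$'s.

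\textbf{Cases 1a and 1b.} If the $p_j$ remain in a compact subset of $M_i \setminus K$ (say $i=1$), then $v_j$ converges locally to a nontrivial solution $v$ of the homogeneous equation $\mathcal{L}_{g_1} v = \Delta_{g_1} v - (|\mu_1|_{g_1}^2 + \tau^2/m) v = 0$ on $M_1 \setminus K$, with $|v| \leq A \, d_{g_1}(\,\cdot\,, K)^\gamma$. The assumption $\gamma > 2 - n$ permits extending $v$ distributionally across $K$, and elliptic regularity upgrades it to a smooth solution on all of $M_1$. In the compact case, the hypothesis $\tilde\Pi_1 \not\equiv 0$ makes the potential $|\mu_1|_{g_1}^2 + \tau^2/m$ non-negative and non-identically zero, so testing $\mathcal{L}_{g_1} v = 0$ against $v$ yields $\nor{\nabla v}{L^2}^2 + \int_{M_1} (|\mu_1|_{g_1}^2 + \tau^2/m)\, v^2 \, \dvolg = 0$, forcing $v \equiv 0$ via unique continuation. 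In the AE/AH cases, the weighted behaviour of $v$ places it in a space on which the isomorphism results for $\mathcal{L}_{g_1}$ proved just above apply, again producing $v \equiv 0$. The variant in which the $p_j$ escape every compact subset of a non-compact summand (Case 1b) is reduced to this setting by multiplying $v_j$ by a cut-off supported outside the poly-neck and contradicting the quantitative isomorphism for $\mathcal{L}_{g_1}$ on $M_1$.

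\textbf{Case 2.} The more delicate scenario is $d_{g_1}(p_j, K) = \bigo{\ep_j}$. Rescaling by $\ep_j$ exactly as in the vector Laplacian argument, the metrics converge to the product metric on $\RK \times \EN$. The essential simplification for the Lichnerowicz operator is that the zero-order coefficient $\chi_1(|\mu_1|_{g_1}^2 + \tau^2/m) + \chi_2(|\mu_2|_{g_2}^2 + \tau^2/m)$ is uniformly bounded and therefore gets multiplied by $\ep_j^2$ under the rescaling, so that it drops out of the limit. One then obtains a nontrivial bounded solution of $\Delta_{\RK \times \EN}\, v = 0$ with $|v(z,t,\theta)| \leq B(\cosh t)^\gamma$. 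Fourier-transforming in $z$ reduces the problem to the family $(\Delta_\EN - |\zeta|^2)\hat v(\zeta,\cdot,\cdot) = 0$ on $\EN$, and integration by parts of $\hat v$ against this equation on the slab $\{|t| \leq T\}$ produces boundary terms of the form $\int_{S^{n-1}} \hat v \, \partial_t \hat v\, [\cosh(\tfrac{n-2}{2}t)]^2 \, \dvol{S^{n-1}}|^{+T}_{-T}$; the hypothesis $\gamma < 0$, combined for $\zeta \neq 0$ with a standard maximum-principle improvement of the decay of $\hat v$ along the lines used for the component $\hat U$ in the vector case, makes these boundary terms vanish as $T \rightarrow +\infty$, forcing $\hat v \equiv 0$ and therefore $v \equiv 0$. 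I expect this step to be the main technical obstacle, since the Fourier reduction and the careful bookkeeping of the decay at $t = \pm\infty$ are the most delicate ingredients of the argument.

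\textbf{Case 3.} Finally, if $d_{g_1}(p_j, K)/\ep_j \rightarrow +\infty$ while $p_j$ still converges into $K$, rescaling by $d_{g_1}(p_j, K)$ yields a nontrivial bounded solution of $\Delta_{\RK \times \RN}\, v = 0$ on $\RK_z \times (\RN_x \setminus \{0\})$ with $|v| \leq C\, |x|^\gamma$. The two bounds $2 - n < \gamma < 0$ combine perfectly: the lower bound allows the singular set $\RK \times \{0\}$ to be removed exactly as in Case 1a, while the upper bound provides the necessary decay at spatial infinity for a straightforward integration-by-parts argument (Fourier in $z$, integration on $\RN$) to yield a vanishing Dirichlet-type energy, hence $v \equiv 0$. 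This completes the list of contradictions and establishes the weighted $C^0$-bound, from which the full $C^{k+2,\al}_\gamma$-estimate follows by scaled interior Schauder estimates as in Proposition \ref{uniform estimate}.
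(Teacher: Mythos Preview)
Your approach differs genuinely from the paper's. You mirror the full blow-up analysis of Proposition \ref{uniform estimate}, treating the interior poly-neck cases by passing to limits on $\RK \times \EN$ and $\RK \times \RN$. The paper instead exploits the scalar nature of $\mathcal{L}_{\gep}$: since the zero-order coefficient is bounded, one first proves a \emph{local} a priori estimate on the truncated poly-neck $T^\ep_\beta$ via explicit barriers for $\Delta_{\gep}$ (imported from \cite{Mazz}) and the maximum principle. This local bound says that if $\rho_{\ep_j}^{-\gamma}|v_j|$ is large at some $p_j$ deep in the neck, it is already large at some $q_j \in \partial T^{\ep_j}_\beta$, which sits at fixed distance $e^{-\beta}$ from $K$; the neck case therefore reduces to your Case 1a, and the blow-ups of your Cases 2 and 3 are never needed. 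What the paper's route buys is the complete avoidance of the Fourier/Schwarzschild analysis; what your route buys is strict parallelism with the vector argument.

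There is, however, a gap in your Case 2 as written. After Fourier transforming, the boundary term in your integration by parts at $t = \pm T$ behaves like $\hat v \,\partial_t \hat v \,[\cosh(\tfrac{n-2}{2}t)]^2$, hence like $(\cosh T)^{2\gamma + n - 2}$. For this to vanish one needs $\gamma < (2-n)/2$, exactly the restriction $\delta < (2-n)/2$ used for the corresponding term in Proposition \ref{uniform estimate}; the hypothesis $\gamma < 0$ alone is insufficient on the range $(2-n)/2 \leq \gamma < 0$. Your decay-improvement remark covers only $\zeta \neq 0$ and does not help at $\zeta = 0$, where the equation is simply $\Delta_{\EN}\hat v = 0$ with no zero-order term to bootstrap against. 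The fix is immediate and actually simpler than what you wrote: for \emph{every} $\zeta$, the maximum principle applied directly to $(\Delta_{\EN} - |\zeta|^2)\hat v = 0$, together with $\hat v \to 0$ at both ends of $\EN$ (this does follow from $\gamma < 0$), forces $\hat v \equiv 0$ without any integration by parts. The same remark streamlines your Case 3. This is precisely the paper's insight: the maximum principle is available throughout the scalar problem, and invoking it from the outset---via barriers on the actual poly-neck rather than on the blow-up limit---yields the shorter argument.
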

\begin{proof}
As in Proposition \ref{uniform estimate} the difficult part is to provide the uniform $C^0_\gamma$ bound. Hence we will focus here on the estimate
\begin{eqnarray}
\label{bound}
\nor{v}{F^{0}_\gamma} & \leq & C \, \cdot \,
\nor{w}{F^{0}_{\gamma-2}} \, .
\end{eqnarray}
As a first step we produce a local version of the {\em a priori estimate}. Assuming the hypothesis of the theorem, we claim that there exist a real number $\beta= \beta(n,\gamma)>0$ and a positive constant $B=B(n,\gamma)>0$ such that for every $\ep \in (0, e^{-\beta})$
\begin{eqnarray}
\label{local a. p. estimate}
\nor{v}{\mathcal{C}^0_{\gamma}(T^\ep_\beta)} & \leq &
B \,\, \big[ \,\, \nor{w}{\mathcal{C}^0_{\gamma -
2}(T^\ep_\beta)} \, + \,
\nor{v}{\mathcal{C}^0_{\gamma}(\partial T^\ep_\beta)} \, \big]\, ,
\end{eqnarray}
where $T^\ep_\beta$ is the portion of the poly-neck where $\beta + \log \ep \leq t \leq -\beta -\log \ep$.

To prove this claim, we observe that the approximate solution metrics are the same as the ones used in \cite{Mazz} in order to produce the generalized connected sum of constant scalar curvature metrics. Hence, we recover from \cite[Lemma 3]{Mazz} the existence of barrier function for $\Delta_{\gep}$. Combining this with the maximum principle one can easily deduce that if $\Delta_{\gep} v = f$, with $f \in C^0$, then for every $2-n < \gamma <0$ there exist a real number $\bar\beta=\bar\beta(n,\gamma)>0$ and a positive constant $A=A(n, \gamma)>0$ such that for
every $\beta > \bar \beta$ and every $\ep \in (0, e^{-\beta})$
\begin{eqnarray*}
\nor{v}{\mathcal{C}^0_{\gamma}(T^\ep_\beta)} & \leq &
A \,\, \big[ \,\, \nor{f}{\mathcal{C}^0_{\gamma -
2}(T^\ep_\beta)} \, + \,
\nor{v}{\mathcal{C}^0_{\gamma}(\partial T^\ep_\beta)} \, \big]\, .
\end{eqnarray*}
To obtain the claim, we let now
$$
f \,\, = \,\,  w \,  + \,\big[ \, \chi_1 \,
\left(|\mu_1|^2_{g_1} \, + \, {\tau^2}/{m}\right) \, + \, \chi_2 \, \left( |\mu_2|^2_{g_2} \, + \, \tau^2/m  \right) \, \big] \, v
$$
and we observe that for large enough $\beta$, we have
\begin{eqnarray*}
\sup_{T^\ep_\be} \left|\rho^{-\gamma +2}_\ep \, |\mu_1|^2_{g_1} \, v
\, \right| & \leq & \tfrac{1}{4} \, \sup_{T^\ep_\be}
\left|\rho_\ep^{-\gamma} \, v \, \right| \\
\sup_{T^\ep_\be} \left|\rho^{-\gamma +2}_\ep \, \left( \tau^2/m
\right) \, v \, \right| & \leq & \tfrac{1}{4} \, \sup_{T^\ep_\be}
\left|\rho_\ep^{-\gamma} \, v \, \right| \, ,
\end{eqnarray*}
for every $\ep \in (0, e^{-\beta})$. The estimate (\ref{local a. p. estimate}) follows at once.

To obtain \eqref{bound} we argue by contradiction as in the proof of Proposition \ref{uniform estimate}. If \eqref{bound} does not hold, we can consider a sequence of counterexamples $(\ep_j, v_j, w_j)$, $j \in \mathbb{N}$, such that
\begin{enumerate}
\item $\ep_j \, \rightarrow \, 0 \, ,$ \,\,\, $j \, \rightarrow \, + \infty$
\item $\mathcal{L}_{\ep_j} \, v_j \, = \, w_j \, ,$\,\,\, $j \in \mathbb{N}$
\item $\nor{v_j}{F^0_\gamma} \, = \, 1\, ,$ \,\,\,\, $j \in \mathbb{N}$ \quad and \quad $\nor{w_j}{F^{0}_{\gamma-2}}  \rightarrow 0\,,$\,\,\, $j \, \rightarrow \, + \infty$\, .
\end{enumerate}
As in Proposition \ref{uniform estimate} we consider the sequence of points $p_j \in M_{\ep_j}$, $j \in \mathbb{N}$ such that the pointwise norm $|v_j|(p_j)$ eventually multiplied by weighting function raised to the power indicated by the weight is greater or equal than $1-\ep_j$.
Depending on the behavior of the sequence $\{p_j \}$, we are going to distinguish several cases.

\smallskip

\textbf{Case 1.} The first case is when up to a subsequence the points $p_j$ lie in a compact region of say $M_1\setminus K$ (the case of $M_2 \setminus K$ is analogous). Hence, it is not hard to dee that the functions $v_j$ tend to a nontrivial distributional solution $v_\infty$ of the homogeneous problem $\mathcal{L}_{g_1} v_{\infty} = 0$ on $M_1 \setminus K$, which in addition satisfies the growth prescription $|\, v_\infty| \leq C \, |d_{g_1} (\, \cdot \, , K)|^{\gamma}$. Since $2-n < \gamma$, $v_\infty$ extends smoothly through $K$. On the other hand, we have already seen that the linearized Lichnerowicz operator about the metric $g_1$ is injective in both the compact and the non compact case. Hence, $v_\infty$ must be trivial, which is a contradiction.

\smallskip

\textbf{Case 2.} The second case is when $M_\ep$ is non compact and, up to a subsequence, the points $p_j$'s leave every compact subset of $M_\ep$. Again, without loss of generality, we assume that the points $p_j$'s of the subsequence lie on the piece of $M_\ep$ coming from $M_1$. In this case the contradiction is reached by the same argument used in {\em Case 1b} of Proposition 5, mutatis mutandis.

\smallskip

\textbf{Case 3.} The remaining case is when up to a subsequence the points $p_j$'s lie in the region $T^{\ep_j}_\be$. Again, in this case the functions $v_j$ tend to a distributional solution of $\mathcal{L}_{g_i} v_{\infty} = 0$ on $M_i \setminus K$ verifying the prescription $|\, v_\infty| \leq C \, |d_{g_i} (\, \cdot \, , K)|^{\gamma}$, for $i=1,2$. The fact that $v_\infty$ is nontrivial is now a consequence of the local {\em a priori} bound \eqref{local a. p. estimate}. In fact in correspondence to the $p_j$'s, we can find another sequence of points $q_j$'s lying in $\partial T^{\ep_j}_\beta$, where the quantities $|v_j|$'s are uniformly bounded from below. Notice that in the limit, the loci $\partial T^{\ep_j}_\beta$ correspond to the locus $\{d_{g_1} (\, \cdot \, , K) = e^{-\beta}\} \cup \{d_{g_2} (\, \cdot \, , K) = e^{-\beta}\}$. To reach the contradiction it is now sufficient to repeat the same argument as in the {\em Case 1} above.
\end{proof}

\subsection{Fixed point argument}

In this subsection, taking advantage of Proposition \ref{uniform estimate licn} we will produce a solution to equation \eqref{licn} by solving the fixed point problem \eqref{fixed}.

As a first step we show that $\mathcal{L}_{\gep} \circ F_\ep$ viewed as a mapping from the space of continuous functions on $M_\ep$ in itself sends a small ball centered at the origin in itself. Having this result, it is not difficult to show that $\mathcal{L}_{\gep}^{-1} \circ F_{\ep}$ is a contraction on this ball. This will provide automatically the fixed point and the control on the size of the correction.

We start with the estimate of the error term. To simplify the notations we decompose $F_\ep (v)$ as
$$
F_\ep(v) \,\, = \,\, F_\ep^{(0)} \, + \, F_\ep^{(1)} (v) \, + \, F_\ep^{(2)} (v,v) \, ,
$$
where the three summands on the right hand side correspond respectively to the zero order term of $F_\ep(v)$, the terms which have a linear dependence on $v$ and to the terms which depend quadratically on $v$. Here the crucial part is the estimate of the pure error term
\begin{eqnarray*}
F^{(0)}_{\ep} & := & -  \, c_m \, (R_{g_1} - R_{\gep}) \, \chi_1 \, + \, c_m
\, (|\mu_1|^2_{g_1}  - |\mu_\ep|^2_{\gep}) \, \chi_1  \nonumber\\
& & - \, c_m \, (R_{g_2} - R_{\gep}) \, \chi_2 \, + \, c_m
\, (|\mu_2|^2_{g_2}  - |\mu_\ep|^2_{\gep}) \, \chi_2 \,.
\end{eqnarray*}
Recalling that the approximate solution metrics are the same used by the second author in \cite{Mazz}, we recover from \cite[Proposition 2]{Mazz} the estimate of the scalar curvature error. More precisely, we have that on the poly-neck region $\{\log \ep \leq t \leq -\log \ep  \}$
$$
\big | \, \chi_1 \, (R_{g_1} - R_{\gep}) \, + \,\chi_2 \, (R_{g_2} - R_{\gep}) \, \big | \,\, \leq \,\, C_1 \,  \ep^{n-2} \, \rho_\ep^{1-n} \,,
$$
for some positive constant $C_1>0$ independent of $\ep \in (0,1)$. Now, using Proposition \ref{uniform estimate}, we show that the same bound holds for the $TT$-tensor error.
We claim that
\begin{eqnarray}
\label{mu bound}
\big | \, \chi_1 \, (|\mu_1|^2_{g_1}  - |\mu_\ep|^2_{\gep})
\, + \,\chi_2 \, (|\mu_2|^2_{g_2}  - |\mu_\ep|^2_{\gep})
\, \big | & \leq & C_2 \,  \ep^{n-2} \, \rho_\ep^{1-n} \,,
\end{eqnarray}
for some positive constant $C_2>0$ independent of $\ep \in (0,1)$. To see this fact, we focus on the first summand on the left hand side (since the reasoning can be repeated for the second one) and we observe that it is dominated by $|\DI{\gep} X|_{\gep}^2 + |\mu -
\mu_1|_{\gep}^2$, where $X$ solves $L_{\gep} X = \sharp \, \dv{\gep} \mu$. Since $|\mu - \mu_1|_{\gep}^2$ is zero outside the
boundary of the poly-neck and since it is clearly bounded in the middle, we can concentrate on the squared pointwise norm of $D_{\gep} X$.

To keep this term under control, we recall from Proposition \ref{uniform estimate} that
there exists a positive constant $C_0>0$ such that for sufficiently small $\ep$'s
\begin{eqnarray*}
|D_{\gep} X|_{\gep} & \leq & C_0 \, \,
\nor{\dv{\gep}\mu}{{E}^1_{\delta-2}} \, \,
\rho_\ep^{\delta - 1} \, .
\end{eqnarray*}
At this point it is clear that if we show that $|D_{\gep}X|_{\gep}^2 \, \leq \, C_3 \,
\ep^{n-2} \, \rho_\ep^{1-n} $, for some $C_3>0$, then our claim \eqref{mu bound} will follows at once. Thus, thanks to the last inequality, it is sufficient to show that
\begin{eqnarray*}
\nor{\dv{\gep}\mu}{E^1_{\delta-2}} & \leq & C_4
\,\, \ep^{\frac{n-2}{2}} \, \rho_\ep^{\frac{3-n}{2}-\delta} \, ,
\end{eqnarray*}
for some positive constant $C_4>0$. Since $2-n  <  \delta <
 (2-n)/2$, this reduces to proving that
\begin{eqnarray}
\quad |\dv{\gep} \mu|_{\gep} \, \leq \, C_5 \,\, \ep^{\frac{1}{2} - \delta}  \rho_\ep^{\delta - 2} \,\, & \hbox{and } & \,\,   |\nabla \, \dv{\gep} \mu_\ep |_{\gep}  \, \leq \, C_6 \,\,  \ep^{\frac{1}{2} - \delta}  \rho_\ep^{\delta - 3} \,.
\end{eqnarray}
for some positive constants $C_5, C_6 > 0$. If we choose the
constant $0\, < \, c \, < \, 1$ which appears in the construction of
$\mu$ in Section \ref{geom construction} to be $c \, = \, 3/(4 - 2 \delta) $, then both
these conditions are satisfied and we get the desired bound for $|
\DI{\gep} X |^2_{\gep}$. The claim \eqref{mu bound} is now proven and we can deduce that the pure error term can be estimated as
$$
|F_{\ep}^{(0)}| \,\, \leq \, C_7 \, \ep^{n-2} \, \rho_\ep^{1-n} \,\, .
$$
As a consequence, we immediately get the following estimate
$$
\rho_\ep^{2-\gamma} \,|F_\ep(v)| \,\, \leq \,\, C_8 \, \big( \, 1 + |v| + |v|^2 \, \big) \, \ep^{1-\gamma} \, ( \cosh t )^{3-n-\gamma} + \, C_9 \, |v|^2 \, \ep^{2-\gamma} (\cosh t)^{2-\gamma} \,.
$$
Now suppose that $v$ lies in a ball of small radius $r_\ep= \ep^\mu$ in $C^0(M_\ep)$, with $\mu>0$ to be determined, and combine the estimate above with \eqref{bound} to obtain
$$
\nor{\mathcal{L}_{\gep}^{-1} \circ F_{\ep}(v)}{F^0_\gamma} \,\, \leq \,\, C_{10} \, \big( \, 1 + r_\ep + r_\ep^2 \, \big) \, \big[\, \ep^{1-\gamma} + \ep^{n-2}  \, \big] \,\, + \,\, C_{11} \, r_\ep^2 \, .
$$
The definition of the $\gamma$-weighted norm implies
$$
|\mathcal{L}_{\gep}^{-1} \circ F_{\ep}(v)| \,\, \leq \,\, C_{12} \big( \, 1 + r_\ep + r_\ep^2 \, \big) \, \big[\, \ep + \ep^{n-2+\gamma}  \, \big] \,\, + \,\, C_{13} \, r_\ep^2 \, .
$$
Now, it is sufficient to choose $a$ such that $0 < a < \min \{1, (n-2) +\gamma\}$ to conclude that for $\ep$ sufficiently small the mapping $\mathcal{L}_{\gep}^{-1} \circ F_{\ep}$ send the ball of radius $r_\ep = \ep^a$ centered at the origin of $C^0(M_\ep)$ in itself. The fact that $\mathcal{L}_{\gep}^{-1} \circ F_{\ep}$ is also a contraction in this ball follows from a direct and easier computation, whose details are left to the reader.

Provided $\ep$ is chosen in a sufficiently small range, we can apply the contraction mapping Theorem and we obtain a fixed point $v_\ep$ for \eqref{fixed} and thus a solution $\tilde u_\ep =1+v_\ep$ to \eqref{licn}. The fact that $v_\ep$ and hence $\tilde u_\ep$ is smooth, follows from the elliptic regularity theory, using a standard boot strap argument. Since $v_\ep$ lies in a ball of radius $r_\ep = \ep^{a}$ in $C^0(M_\ep)$, we deduce at once that the new solutions of the constraint equations converge to hold ones on the compact sets of $M_i\setminus K$, $i=1,2$, with respect to the $C^0$-topology, as $\ep \rightarrow 0$.

To obtain the smooth convergence claimed in the statement of Theorem \ref{teo 1}, it is now sufficient to combine this $C^0$-bound with interior Schauder estimate. To see this fact we show how it is possible to obtain the $C^{2,\al}$-control on the compact subsets of $M_1 \setminus K$. The same argument can be applied with minor changes in order to obtain $C^{k,\al}$-convergence on the compact subsets of $M_i \setminus K$, for $i=1,2$ and every $k \in \mathbb{N}$. We fix now a compact subset $Q \subset M_1 \setminus K$ and thanks to \cite[Proposition 6.2]{Gil-Tru}, we have that
\begin{eqnarray}
\label{int schauder}
\nor{v_\ep}{C^{2,\al}(Q)}^{(0)} & \leq & A_0 \,\, \big[ \, \nor{F_{\ep} (v_{\ep})}{C^{0,\al}(Q)}^{(2)}  + \, \nor{v_\ep}{C^{0}(Q)}     \,\big] \,,
\end{eqnarray}
where the interior norms $\nor{\cdot}{C^{k,\al}(Q)}^{(\sigma)}$ correspond to the interior norms $|\cdot|_{k,\al;Q}^{(\sigma)}$ defined in \cite{Gil-Tru}. From \eqref{int schauder} we deduce that there exists a positive constant $A_1>0$ such that
\begin{eqnarray*}
\nor{v_\ep}{C^{2,\al}(Q)}^{(0)} & \leq & A_1 \,\, \big[\,\, \nor{F_\ep^{(0)}}{C^{1}(Q)}^{(2)}  + \, \nor{v_\ep}{C^{0}(Q)} + \, \sup_Q d_p^2 \, |v_\ep|^2 \\
& & +  \sup_Q \, d^2_p \, |v_\ep| \, |R_{g_1}-R_{\gep}| \, + \sup_Q \, d_p^3 \, |v_\ep| \, |\nabla (R_{g_1} - R_{\gep})| \\
& & +  \sup_Q \, d^2_p \, |v_\ep| \, \big||\mu_1|^2_{g_1}-|\mu_\ep|^2_{\gep}\big| \, + \sup_Q \, d_p^3 \, |v_\ep| \, |\nabla (|\mu_1|^2_{g_1}-|\mu_\ep|^2_{\gep})| \\
& &  +  \sup_Q \, d^3_p \, |\nabla v_\ep| \, |R_{g_1}-R_{\gep}| \, + \sup_Q \, d^3_p \, |\nabla v_\ep| \, \big||\mu_1|^2_{g_1}-|\mu_\ep|^2_{\gep}\big|
\\
& & + \sup_Q \, d_p^3 \, |\nabla v_\ep| \, |v_\ep| \,\, \big] \,,
\end{eqnarray*}
where $d_p$ is a short notation for the distance to the boundary of the compact set $Q$, namely $d_p := dist_{g_1}(p, \partial Q)$. Now it is easy to check that, except for the last three summands, the right hand side tends to zero, as $\ep \rightarrow 0$. On the other hand, for $\ep$ sufficiently small, these last three terms involving the first derivative of $v_\ep$ can be absorbed by the left hand side. Hence, there exists a positive constant $A_2>0$ and there exists a real number $\ep_0$, such that for every $\ep \in (0, \ep_0]$
\begin{eqnarray*}
\nor{v_\ep}{C^{2,\al}(Q)}^{(0)} & \leq & A_2 \,\, \big[\,\, \nor{F_\ep^{(0)}}{C^{1}(Q)}^{(2)}  + \, \nor{v_\ep}{C^{0}(Q)}
+ \, \nor{v_\ep}{C^{0}(Q)}^{(2)} \, \nor{v_\ep}{C^{0}(Q)} \\
& & +\, \nor{ R_{g_1} - R_{g_\ep} }{C^{0}(Q)}^{(2)} \nor{v_\ep}{C^{0}(Q)} +\, \nor{ |\mu_1|^2_{g_1}-|\mu_\ep|^2_{\gep}}{C^{0}(Q)}^{(2)} \nor{v_\ep}{C^{0}(Q)} \\
& & +\, \nor{ \nabla (R_{g_1} - R_{g_\ep}) }{C^{0}(Q)}^{(3)} \nor{v_\ep}{C^{0}(Q)} \\
& &  +\, \nor{ \nabla ( |\mu_1|^2_{g_1}-|\mu_\ep|^2_{\gep} ) }{C^{0}(Q)}^{(3)} \nor{v_\ep}{C^{0}(Q)} \,\, \big] \,.
\end{eqnarray*}
We deduce that $v_\ep \rightarrow 0$ on the compact subset of $M_1 \setminus K$ with respect to the $C^{2,\al}$-topology. As we have already observed, this argument can be easily modified in order to deduce the smooth convergence on the compact subsets of $M_i\setminus K$, $i=1,2$. This concludes the proof of Theorem \ref{teo 1}.

\section{Localized gluing}\label{secLoc}

In this  section, we give the proof of Theorem \ref{Tloc}.
We will then show that the generalized conformal gluing procedure used in \cite{Mazz 3}
for compact manifolds and here for AE and AH manifolds, can be, in a second
hand, localized  in order to keep the original data outside a small neighborhood
of the neck. This technique, inspired by the Corvino Shoen method \cite{CS}, was already employed in \cite{CD2} to localize the gluing
of Isenberg, Mazzeo and Pollack \cite{IMP}.

\proof Let $\chi$ be a non negative smooth   cut-off function
equal to $1$ in a neighborhood of $\partial K^i_{\ep_1}$ and equal to zero in a
neighborhood of $\partial K^i_{2\ep_1}$. On $\Gamma^i(\ep_1,2\ep_1)$ set
$$\mathring{\Pi}_\ep = \chi \tilde \Pi_\ep + (1-\chi) \Pi_i\;,$$
$$\mathring{g}_\ep = \chi \tilde  g_\ep + (1-\chi) g_i\;.$$
Then $(\mathring{g}_\ep,\mathring{\Pi}_\ep)$ coincides with the data
$( \tilde g_\ep,\tilde \Pi_\ep)$ in a neighborhood of $\partial K^i_{\ep_1}$, and
coincides with the original data $(g_i,\Pi_i)$ in a neighborhood of
$\partial K^i_{2\ep_1}$. It follows that $\rho
(\mathring{g}_t,\mathring{\Pi}_t)$ and $
J(\mathring{g}_t,\mathring{\Pi}_t)$ are supported away from the
boundary in $\Gamma^i(\ep_1,2\ep_1)$. Since the data constructed in \cite{Mazz 3} or here in the AE and AH case, converge
uniformly, in any $C^{k, \alpha}$ norm, to the original ones on $\Gamma^i(\ep_1,2\ep_1)$ we will
have
$$\lim_{\ep\to 0} \rho (\mathring{K}_\ep,\mathring{g}_\ep) =0=\lim_{\ep\to 0}
J(\mathring{K}_\ep,\mathring{g}_\ep)\;.$$  Theorem 5.9 and
Corollary 5.11 of \cite{CD2} provides  $0<\ep_2\leq \ep_1$ such that for all
$0<\ep\leq \ep_2$ there exists a solution
$(\hat{g}_\ep,\hat{\Pi}_\ep)$ of the vacuum constraint
equations which is smoothly extended by $(\tilde g_\ep,\tilde {\Pi}_\ep)$
across $\partial K^i_{\ep_1}$ and by $(g_i,\Pi_i)$ across $\partial K^i_{2\ep_1}$.
\qed

\section{Constraint with cosmological constant and constant scalar curvature metrics}

The previous constructions (conformal and local gluing) can be adapted to the context of vacuum Einstein constraint equations with cosmological constant
\begin{eqnarray}
\label{ECE 1 Lambda}
J(g,\Pi):=\dv{g} \, \Pi - \di \left( \tr{g} \, \Pi \right) & = & 0  \\
\label{ECE 2 Lambda} \rho(g,\Pi):=R_g - |\Pi|_{g}^2 + \left( \tr{g} \, \Pi \right)^2 & =
& 2\Lambda \, .
\end{eqnarray}
As an interesting example we mention the case of the hyperbolic space $\mathbb{H}^m$ viewed as horizontal slice in AdS space time. In fact it is well known that this object satisfies the equations above with $\Lambda=-m(m-1)/2$ and $\Pi=0$. To adapt our construction to this case it is sufficient to replace the condition $\tau=m$ (which is natural when $\mathbb{H}^m$ is viewed as a slice in Minkowsky space-time) by $\tau=0$, as in the AE case.

Others interesting examples are given by constant scalar curvature Riemannian metrics. In fact these are time symmetric ($\Pi=0$) solutions
to the vacuum Einstein contraint with cosmological constant $\Lambda$.
In \cite{Mazz} and \cite{Mazz 2} generalized connected sum of such metrics have been obtained by the second author.
The adaptation of these conformal gluing to the non compact AE and AH context proceed exactly as in our proof of Theorem \ref{teo 1} and in fact it is easier.
Also, the adaptation of the localized gluing proceed exactly as in Section \ref{secLoc}, the no KID's condition
being here that the adjoint $DR(g)^*$ of the linearized scalar curvature operator has no kernel on $\Gamma^i(\ep_1,2\ep_1)$.

\section{Concluding remarks}

We have shown that the generalized connected sum of some class of solutions to the Einstein constraints equations
can also be performed in natural non compact setting such as AE or AH manifolds, as far as the submanifold we cut around is compact. In addition, we have shown that whatever the manifold is compact or not, the conformal gluing procedure can be localized in a neighborhood of the poly-neck region.

To conclude, we list some open questions which will be the object of further investigation.

1) In the non compact setting it would be natural to consider the connected sum along non compact submanifold so :

a) It seems that the conformal gluing could be adapted to this situation, provided the submanifold itself present a suitable asymptotic behavior.

b) The `localized gluing' need also to be understood in this case. A closely related question is the gluing along an horizontal band in the (half space type) AH context. It seems to us that in this situation one could overcome some of the scaling difficulties encountered in \cite{CD5}.

2) So far, the localized gluing procedure can be applied only after the conformal one is performed. It will
be interesting to understand under which hypothesis one can produce a localized gluing directly (see the unfortunate tentative in \cite{CD1} section 5 withdraw  in  \cite{CD1bis}).

3) It will be interesting to see if the generalized gluing  can produce black hole with special topology.\\
\\
\begin{small}
\noindent \textbf{Acknowledgements.}
{\em The first author is
grateful to the french ANR project `GeomEinstein' for financial support. The second author is supported by the Italian project FIRB--IDEAS `Analysis and Beyond' and he is grateful to the University of Avignon for the hospitality during the preparation of this work.}
\end{small}

\end{document}